\renewcommand{\qed}{$\blacksquare$}
\newcommand{\A}{\mathcal{A}}
\newcommand{\s}{\mathcal{S}}
\newcommand{\U}{\mathcal{U}}
\newcommand{\p}{\mathcal{P}}
\newcommand{\pmra}{\mathcal{P}^{\textrm{MRA}}}
\newcommand{\pso}{\mathcal{P}^{\textrm{SO}}}
\newcommand{\stilde}{\widetilde{S}}
\newcommand{\M}{\mathcal{M}}
\newcommand{\R}{\mathbb{R}}
\newcommand{\N}{\mathbb{N}}
\newcommand{\C}{\mathbb{C}}
\newcommand{\Z}{\mathbb{Z}}
\newcommand{\T}{\mathbb{T}}
\newtheorem{theorem}{Theorem}[section]
\newtheorem{corollary}[theorem]{Corollary}
\newtheorem{lemma}[theorem]{Lemma}
\newtheorem{proposition}[theorem]{Proposition}
\newtheorem{definition}[theorem]{Definition}
\theoremstyle{definition}
\newtheorem{notation}[theorem]{Notation}
\def\proof{\par\medskip\noindent {\sc Proof. }}
\renewcommand{\qed}{\hfill $\blacksquare$}
\begin{document}
\title{Projections and Dyadic Parseval Frame MRA wavelets} 

\author{Peter M. Luthy}
\address{Department of Mathematics, Washington University in St. Louis, MO 63130}
\email{luthy@math.wustl.edu}

\author{Guido L. Weiss}
\address{Department of Mathematics, Washington University in St. Louis, MO 63130}
\email{guido@math.wustl.edu}

\author{Edward N. Wilson}
\address{Department of Mathematics, Washington University in St. Louis, MO 63130}
\email{enwilson@math.wustl.edu}

\keywords{wavelet, Parseval frame, multiresolution analysis, shift invariant space, harmonic analysis}

\begin{abstract}
\noindent A classical theorem attributed to Naimark states that, given a Parseval frame $\mathcal{B}$ in a Hilbert space $\mathcal{H}$, one can embed $\mathcal{H}$ in a larger Hilbert space $\mathcal{K}$ so that the image of $\mathcal{B}$ is the projection of an orthonormal basis for $\mathcal{K}$.  In the present work, we revisit the notion of Parseval frame MRA wavelets from \cite{PSWXI} and \cite{PSWXII} and produce an analog of Naimark's theorem for these wavelets at the level of their scaling functions.  We aim to make this discussion as self-contained as possible and provide a different point of view on Parseval frame MRA wavelets than that of \cite{PSWXI} and \cite{PSWXII}.
\end{abstract}
\maketitle


\section{Notation and Preliminary Remarks}
\subsection{Notation}
The Fourier transform in this paper will be denoted by
\[
(\mathfrak{F}f)(\xi)=\widehat{f}(\xi)=\int_\R f(x)e^{-2\pi ix\xi}dx.
\]
We will identify subsets of the torus, $\mathbb{T}=\R/\Z$, with subsets of $\R$ which are invariant under translations by integers --- in particular, functions on $\T$ can be considered to be 1-periodic function on $\R$ and the Lebesgue measure we associate to $\T$ has total mass 1.  We will frequently use the operators $D^j$ and $T_k$ from $L^2(\R)$ to itself given by $D^jf=2^{j/2}f(2^j\cdot)$ and $T_kf=f(\cdot-k)$.  In particular, $D^j$ and $T_k$ are unitary maps corresponding to the dyadic dilations and integer translations.  We will let $\psi_{jk}=D^jT_k\psi=2^{j/2}\psi(2^j\cdot-k)$.
\begin{definition}
We will use the notation $m\bullet g$ to denote the function which satisfies $\widehat{m\bullet g}=m\cdot\widehat{g}$, whenever this function is well-defined.
\end{definition}

\begin{remark}
The above definition makes sense, for instance, if $g\in L^2(\R)$ and $m\in L^\infty(\R)$: then $m\cdot \widehat{g}$ is an $L^2(\R)$ function, and so $m\bullet g$ is well-defined since the Fourier transform is an isometry on $L^2(\R)$.  Though this notation is perhaps new, the above operation is actually quite common in a variety of areas of analysis.  It comes up repeatedly in the study of shift-invariant spaces, e.g. in \cite{HSWWtranslates}.  We should take a moment to reference a few of the authors who have made significant contributions to the study of shift invariant spaces upon which much of the present work rests: Helson, \cite{helson}, de Boor, DeVore, and Ron, \cite{BDR1} and \cite{BDR2}, Bownik, \cite{bowniksis}, as well as many of the references contained in those papers.  The operation $m\bullet g$ operation also comes up in the study of singular integrals --- for example, the Hilbert transform corresponds to such a map when, say, $g\in L^2(\R)$ and $m$ is (a multiple of) the function which is $1$ for positive reals and $-1$ for negative reals.\\
\end{remark}

In everything we discuss below --- unless otherwise noted --- equalities between functions are taken to be equalities in the almost everywhere sense; equalities between sets are taken to be equalities up to sets of measure zero.  All functions we will discuss below are measurable.

\subsection{Dyadic Parseval Frame MRA Wavelets}
The term wavelet can mean a variety of things in different contexts.  For our purposes, a wavelet will be a function $\psi\in L^2(\R)$ so that $\{\psi_{jk}:j,k\in\Z\}$ linearly generates $L^2(\R)$ in some way --- these are more accurately described as \emph{dyadic} wavelets since the dilations are dyadic dilations.  Classically, wavelets were always taken to be orthonormal wavelets; that is, functions $\psi$ so that $\{\psi_{jk}:j,k\in\Z\}$ corresponded to orthonormal bases of $L^2(\R)$ (e.g. the Haar wavelet, Shannon wavelet, Daubechies wavelets, and so on).  In more recent years, however, more general spanning sets have been considered.  We recall the definition of a frame:
\begin{definition}[Frame]
On a (separable) Hilbert space $\mathcal{H}$ with norm $\|\cdot\|$, a frame with frame bounds $A$ and $B$ is a collection $\{u_n\in\mathcal{H}:n\in\Z\}\subset\mathcal{H}$ so that there are numbers $A$ and $B$ with $0<A\le B<\infty$ so that for all $f\in\mathcal{H}$
\[
A\|f\|^2\le\sum_{n\in\Z}|\langle f,u_n\rangle|^2\le B\|f\|^2.
\]
\end{definition}
Of particular interest in applications are the Parseval frames:
\begin{definition}[Parseval Frame]
On a (separable) Hilbert space $\mathcal{H}$ with norm $\|\cdot\|$, a Parseval frame is a frame with frame bounds both equal to $1$.  That is, for all $f\in\mathcal{H}$
\[
\|f\|^2=\sum_{n\in\Z}|\langle f,u_n\rangle|^2.
\]
It is worth mentioning that, in older literature, Parseval frames are frequently referred to as ``normalized tight frames''.
\end{definition}

Every orthonormal basis is clearly a Parseval frame, but Parseval frames are substantially more general: for an example of a Parseval frame, one could take any two orthonormal bases $\mathcal{U}:=\{u_n:n\in\Z\}$ and $\mathcal{V}:=\{v_n:n\in\Z\}$ and consider the collection $\mathcal{W}:=\left\{w_{2n}:=\frac{1}{\sqrt{2}}u_n,w_{2n+1}:=\frac{1}{\sqrt{2}}v_n:n\in\Z\right\}$.  This collection $\mathcal{W}$ is ``redundant'' in the sense that an element $f$ of $L^2(\R)$ can be represented as (countable) linear combinations of elements of $\mathcal{W}$ in more than one way.  This redundancy is one of the key features of a Parseval frame and is frequently of theoretical and practical use; having multiple representations for the same data can aid, for example, in error correction.  As one explicit example in $L^2([0,1))$, consider the collection $\{e^{\pi inx}:n\in\Z\}$  --- this is the union of two orthonormal bases: the usual one, $\{e^{2\pi inx}:n\in\Z\}$, together with a shifted version $\{e^{\pi ix}e^{2\pi inx}:n\in\Z\}$; modulo a normalization factor, the union of these two collections form a Parseval frame for $L^2([0,1))$ --- in sampling theory, this Parseval frame is ``twice oversampled'' which would provide some protection against various undesirable effects in signal processing.  Some Parseval frames are generated by wavelets.  We define these below.
\begin{definition} 
A Parseval frame wavelet is any function $\psi$ in $L^2(\R)$ so that $\{\psi_{jk}:j,k\in\Z\}$ forms a Parseval frame for $L^2(\R)$.  We let $\mathcal{P}$ denote the class of all such Parseval frame wavelets.
\end{definition}
As a pair of simple examples, consider the two functions defined by 
\[
\widehat{\psi}_0=\chi_{[-1,-1/2)\cup[1/2,1)}
\]
and 
\[
\widehat{\psi}_1=\chi_{[-1/2,-1/4)\cup[1/4,1/2)}.
\]
In each case, the dyadic dilations of the relevant union of two intervals produce a tiling of the line; however, since the Fourier transform sends integer translation to modulation, a trivial modification of the above discussion of trigonometric polynomials in $L^2([0,1))$ quickly shows that $\psi_0$ is an orthonormal wavelet while $\psi_1$ is a Parseval frame wavelet which is \emph{not} an orthonormal wavelet.

We should mention that most Parseval frames are not generated by elements of $\p$.  In particular, Parseval frames coming from wavelets must have distinct elements.  We will prove this below but first give a useful and simple characterization of Parseval frame wavelets.
\begin{theorem}\label{caldtq}
A function $\psi\in L^2(\R)$ lies in $\mathcal{P}$ if and only if the following two conditions hold:
\begin{enumerate}
\item (Dyadic Calder\'{o}n Condition)
\[
\sum_{j\in\Z}|\widehat{\psi}(2^j\xi)|^2=1\textrm{ almost everywhere, and}
\]
\item ($t_q$ Equation) 
\[
t_q(\xi):=\sum_{j\ge 0}\widehat{\psi}(2^j\xi)\overline{\widehat{\psi}(2^j(\xi+q)}=0\textrm{ almost everywhere whenever }q\textrm{ is an odd integer.}
\]
\end{enumerate}
\end{theorem}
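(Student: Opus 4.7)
The plan is to translate the Parseval identity $\|f\|^2 = \sum_{j,k}|\langle f,\psi_{jk}\rangle|^2$ into a frequency-side identity, expand it, and then read off the two conditions. First I would use Plancherel together with the formula $\widehat{\psi_{jk}}(\xi) = 2^{-j/2}e^{-2\pi ik\xi/2^j}\widehat{\psi}(2^{-j}\xi)$ and a change of variable $\eta = 2^{-j}\xi$ to write
\[
\langle f,\psi_{jk}\rangle = 2^{j/2}\int_{\R}\widehat{f}(2^j\eta)\,\overline{\widehat{\psi}(\eta)}\,e^{2\pi ik\eta}\,d\eta,
\]
so the inner products are (up to the factor $2^{j/2}$) the Fourier coefficients of the $1$-periodization $P G_j$ of $G_j(\eta):=\widehat{f}(2^j\eta)\overline{\widehat{\psi}(\eta)}$.

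Next, Parseval's theorem on $\T$ yields
\[
\sum_{k\in\Z}|\langle f,\psi_{jk}\rangle|^2 = 2^j\int_0^1 |PG_j(\eta)|^2\,d\eta = 2^j\sum_{m\in\Z}\int_{\R}G_j(\eta)\overline{G_j(\eta+m)}\,d\eta,
\]
where the last equality uses the $1$-periodicity of $PG_j$ to unfold one of the sums. Undoing the substitution with $\xi = 2^j\eta$, this becomes
\[
\sum_{m\in\Z}\int_{\R}\widehat{f}(\xi)\overline{\widehat{f}(\xi+2^j m)}\,\overline{\widehat{\psi}(2^{-j}\xi)}\,\widehat{\psi}(2^{-j}\xi+m)\,d\xi.
\]
The term $m=0$, after summing over $j$ and invoking Fubini, contributes $\int|\widehat{f}(\xi)|^2\,\sum_j|\widehat{\psi}(2^{-j}\xi)|^2\,d\xi$. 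For $m\neq 0$, I would write uniquely $m=2^s q$ with $q$ odd and $s\ge 0$ and then reindex $j$ by $j'=j-s$; the inner sum over $s\ge 0$ then collapses precisely to $\overline{t_q(2^{-j'}\xi)}$ by the definition of $t_q$. Summing over all $j'\in\Z$ and all odd $q$, the total cross contribution is
\[
\sum_{q\text{ odd}}\sum_{j\in\Z}\int_{\R}\widehat{f}(\xi)\overline{\widehat{f}(\xi+2^jq)}\,\overline{t_q(2^{-j}\xi)}\,d\xi.
\]
Adding everything up and equating with $\|f\|^2=\int|\widehat{f}|^2\,d\xi$ gives an identity that, if (1) and (2) hold, reduces to a tautology; this proves sufficiency.

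For necessity I would specialize test functions $\widehat f$. First, taking $\widehat f$ to be the indicator of a small set (or a bump) shows that the $m=0$ contribution alone must equal $\|\widehat f\|^2$ in the limit of shrinking support, forcing $\sum_j|\widehat\psi(2^{-j}\xi)|^2 = 1$ almost everywhere (equivalent to condition (1) by $j\mapsto -j$); this also implies all cross terms vanish. To isolate a single $t_q$, I take $\widehat f$ supported in a small neighborhood of a generic $\xi_0$ with $\widehat f(\xi_0+2^j q)$ activated for exactly one pair $(j,q)$; the vanishing of the remaining cross term then yields $t_q\equiv 0$ a.e. for each odd $q$.

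The main obstacle I foresee is rigor at the two places where sums and integrals are swapped: the passage to the periodization identity in $L^2(0,1)$ and the Fubini reshuffling of the $\sum_j\sum_m$ double sum into the $\sum_q\sum_s\sum_{j'}$ form. I would handle both by first restricting to $\widehat f\in C_c^\infty(\R\setminus\{0\})$, which makes all the sums locally finite and the interchanges trivial, and then extending to all of $L^2(\R)$ by density once conditions (1) and (2) are in hand.
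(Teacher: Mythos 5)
The paper does not actually prove this theorem; it defers to \cite[Chapter 7]{HWWave}, and the fundamental identity you derive,
\[
\sum_{j,k}|\langle f,\psi_{jk}\rangle|^2=\int_\R|\widehat f(\xi)|^2\sum_{j\in\Z}|\widehat\psi(2^j\xi)|^2\,d\xi+\sum_{q\ \mathrm{odd}}\sum_{j\in\Z}\int_\R\widehat f(\xi)\overline{\widehat f(\xi+2^jq)}\;\overline{t_q(2^{-j}\xi)}\,d\xi ,
\]
is precisely the identity on which that proof rests; your computation of it (periodization, unfolding, the splitting $m=2^sq$ with $q$ odd) is correct, and the sufficiency half is essentially right modulo the convergence issues you already flag. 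The genuine gap is in the necessity direction. First, shrinking the support of $\widehat f$ does \emph{not} make the cross terms negligible: if $\widehat f=|I|^{-1/2}\chi_I$ with $|I|=\epsilon$, then every pair $(j,q)$ with $2^j|q|<\epsilon$ still contributes, and since $j$ may tend to $-\infty$ there are infinitely many such pairs; the naive absolute bound on their total contribution behaves like $\sum_{2^j<\epsilon}\epsilon\,2^{-j}$ after normalization, which diverges. Moreover, applying Lebesgue differentiation to the main term already presupposes that $\sum_j|\widehat\psi(2^j\xi)|^2$ is locally integrable, which is not free. The standard repair is to first prove the a priori bound $\sum_j|\widehat\psi(2^j\xi)|^2\le 1$ a.e.\ by restricting the frame inequality to those $j$ for which $2^{-j}\,\mathrm{supp}\,\widehat f$ fits inside a single period: for such $j$ the periodization is injective, the $k$-sum equals $\int|\widehat f(\xi)|^2|\widehat\psi(2^{-j}\xi)|^2\,d\xi$ with no cross terms at all, and monotone convergence plus differentiation gives the bound (and then Cauchy--Schwarz makes $t_q$ well defined with $|t_q|\le1$).

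Second, your device for isolating a single $t_q$ --- choosing $\widehat f$ so that ``$\widehat f(\xi_0+2^jq)$ is activated for exactly one pair $(j,q)$'' --- cannot be realized: any $\widehat f$ supported near $\xi_0$ activates the infinitely many pairs with $2^jq$ small, and a two-bump $\widehat f$ with bumps separated by $q_0$ additionally activates every pair with $2^jq$ within $2\epsilon$ of $q_0$, of which there are again infinitely many dyadic rationals. The clean way to finish (and the route the cited proof effectively takes) is a modulation/translation argument: apply the (already established) vanishing of the cross-term functional to $T_\alpha f$ for all real $\alpha$, note that the resulting expression is an almost periodic function of $\alpha$ whose frequency-$s$ coefficient is $\sum_{2^jq=s}\int\widehat f(\xi)\overline{\widehat f(\xi+s)}\,\overline{t_q(2^{-j}\xi)}\,d\xi$, and use the uniqueness of the representation $s=2^jq$ with $q$ odd to conclude that each individual term, hence each $t_q$, vanishes a.e. Without one of these two repairs the necessity direction does not go through. (A smaller point: for $\widehat f\in C_c^\infty(\R\setminus\{0\})$ the sum over $j$ is still infinite and the double sum over $(j,m)$ is not obviously absolutely convergent, so the interchange in the sufficiency direction is legitimate but not ``trivial''; it needs the bound $|t_q|\le1$ and the local finiteness in $m$ for each fixed $j$.)
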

A proof of this result can be found in \cite[Chapter 7]{HWWave} --- we warn the reader that Theorem 7.1 contained within states the requirement that $\|\psi\|_2\ge 1$.  However, the proof of Theorem 7.1 therein actually proves the above statement; the condition $\|\psi\|_2\ge 1$ merely guarantees that the Parseval frame obtained is actually an orthonormal basis.  The Calder\'{o}n condition is a fundamental identity in the study of wavelets even in vastly more abstract settings than discussed presently.  We refer the reader to \cite{WW} for a discussion of wavelets and the Calder\'{o}n condition for a variety of dilation groups.  The $t_q$ equation is mostly a technical requirement owing to the interaction of dyadic dilations and integer translations\footnote{for example, $T_kD^j=D^jT_{2^{j}k}$, but $2^{j}k$ is not an integer if $k$ is odd and $j<0$.} and is unnecessary in the context of \cite{WW}.  The two equations given in Theorem \ref{caldtq} are extremely useful in understanding the properties of Parseval frame wavelets; for example, consider the following lemma\footnote{Parseval frames coming from wavelets actually satisfy much stronger linear independence conditions than the lack of repeated elements; see, for example, \cite{BSlindep}}:
\begin{lemma}
Suppose that $\psi\in\p$.  Then $\psi_{jk}=\psi_{mn}$ if and only if $j=m$ and $k=n$.
\end{lemma}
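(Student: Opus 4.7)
My approach is to work on the Fourier side, strip off phase factors with absolute values, and extract a self-similarity identity for $|\widehat{\psi}|$ that the Calder\'on condition in Theorem~\ref{caldtq} rules out unless $j=m$; once $j=m$, the residual equality $T_k\psi = T_n\psi$ forces $k=n$ since a nonzero $L^2(\R)$ function cannot be periodic. First I would compute, via the substitution $y=2^jx-k$, that
\[
\widehat{\psi_{jk}}(\xi) = 2^{-j/2}e^{-2\pi ik 2^{-j}\xi}\widehat{\psi}(2^{-j}\xi).
\]
Taking absolute values of the equation $\widehat{\psi_{jk}} = \widehat{\psi_{mn}}$ then yields $2^{-j/2}|\widehat{\psi}(2^{-j}\xi)| = 2^{-m/2}|\widehat{\psi}(2^{-m}\xi)|$ a.e. Setting $\ell = j-m$ and making the obvious change of variable, this becomes
\[
|\widehat{\psi}(2^\ell \eta)|^2 = 2^{-\ell}|\widehat{\psi}(\eta)|^2 \quad \textrm{a.e.}
\]

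Next I would iterate this identity in both directions (substituting $\eta \mapsto 2^{-\ell}\eta$ yields the backward step) to obtain $|\widehat{\psi}(2^{r\ell}\eta)|^2 = 2^{-r\ell}|\widehat{\psi}(\eta)|^2$ for every $r\in\Z$. Assuming $\ell \ne 0$ for contradiction, I can choose $r$ so that $-r\ell$ is arbitrarily large and positive. By the Calder\'on condition, every individual term in $\sum_{j\in\Z}|\widehat{\psi}(2^j\eta)|^2 = 1$ is bounded above by $1$, so the iterated identity forces $|\widehat{\psi}(\eta)|^2 \le 2^{r\ell} \to 0$. Hence $\widehat{\psi}\equiv 0$ a.e., which contradicts the Calder\'on identity itself. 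Therefore $\ell = 0$, i.e., $j=m$.

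With $j=m$ in hand, cancelling $D^j$ leaves $T_k\psi = T_n\psi$, that is, $\psi(x)=\psi(x+(k-n))$ a.e. A nonzero $L^2(\R)$ function admits no positive period (the $L^2$-norm over one period is positive, so over $\R$ it would be infinite), and Calder\'on again precludes $\psi\equiv 0$; therefore $k=n$. The converse direction is trivial. The only potentially subtle point is ensuring that the single-step scaling identity may be iterated in both positive and negative $r$, but this is automatic because the identity is reversible after a change of variable.
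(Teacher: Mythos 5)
Your proof is correct and follows essentially the same route as the paper: Fourier transform the identity $\psi_{jk}=\psi_{mn}$, extract the scaling relation $|\widehat{\psi}(2^\ell\eta)|^2=2^{-\ell}|\widehat{\psi}(\eta)|^2$, and use the Calder\'{o}n condition to rule out $\ell\neq 0$ before handling the translations. The only cosmetic differences are that the paper substitutes the relation directly into the Calder\'{o}n sum to obtain $1=2^\ell$ in a single step (rather than iterating and using the term bound $|\widehat{\psi}(2^{r\ell}\eta)|^2\le 1$), and it finishes on the Fourier side by forcing $e^{-2\pi i(k-n)\xi}=1$ on a set of positive measure instead of invoking the non-periodicity of nonzero $L^2(\R)$ functions.
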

\begin{proof}
Suppose that $\psi\in\p$ with $\psi_{jk}=\psi_{mn}$.  Then
\[
2^{j/2}\psi(2^jx-k)=2^{m/2}\psi(2^mx-n).
\]
Without loss of generality, we assume that $m\le j$.  Then by applying $D^{-j}$ to the previous equality and then $T_{-2^{j-m}n}$, the above is possible if and only if, for $\ell=j-m\ge 0$,
\[
\psi(x-k+2^{-\ell}n)=2^{-\ell/2}\psi(2^{-\ell} x)
\]
Taking Fourier transforms of both sides, one gets
\[
\widehat{\psi}(\xi)e^{-2\pi i(k-2^{-\ell}n)\xi}=2^{\ell/2}\widehat{\psi}(2^\ell \xi).
\]
By the Dyadic Calder\'{o}n Condition, one has for almost every $\xi$,
\[
1=\sum_{p\in\Z}|\widehat{\psi}(2^p\xi)|^2=\sum_{p\in\Z}\left|\widehat{\psi}(2^p\xi)e^{-2\pi i(k-2^{-\ell}n)\xi}\right|^2=\sum_{p\in\Z}\left|2^{\ell/2}\widehat{\psi}(2^{\ell+p} \xi)\right|^2=2^\ell\sum_{p\in\Z}|\widehat{\psi}(2^{p+\ell}\xi)|^2=2^\ell.
\]
Thus $j$ and $m$ must be equal, giving $\widehat{\psi}(\xi)e^{-2\pi i(k-n)\xi}=\widehat{\psi}(\xi)$; since $\psi$ is nonzero on a set of positive measure, this forces $e^{-2\pi i(k-n)\xi}=1$, i.e. $n=k$.\qed
\end{proof}

The class $\p$ is known to have an extremely complicated structure made up of many different subclasses; see \cite{SSW}.  For example, the class of orthonormal wavelets --- those for which $\psi_{jk}$ is an orthonormal basis for $L^2(\R)$ --- can be characterized as those $\psi\in\p$ for which $\|\psi\|_2=1$.  We seek to understand one of the subclasses of $\p$ which is more complicated than the family of orthonormal wavelets but still nice enough to be of serious practical interest.  To that end, we recall the following definitions:
\begin{definition}
A closed subspace $V$ of $L^2(\R)$ is said to be shift invariant if $T_kV\subset V$ for any integer $k$.  Given a collection of nonzero functions $\{\phi_n:n\in\Z\}$, the space $\left\langle \{\phi_n:n\in\Z\}\right\rangle$ is the intersection of all shift-invariant subspaces containing $\{\phi_n:n\in\Z\}$.  In the case that $\{\phi_n:n\in\Z\}$ consists of exactly one element $\phi$, we denote the shift-invariant space generated by $\phi$ as $\langle\phi\rangle$ rather than the more cumbersome $\langle\{\phi\}\rangle$ and refer to it as a principal shift-invariant space; in particular, one has
\[
\langle\phi\rangle=\overline{\textrm{span}\{\phi(x-k):k\in\Z\}}.
\]
\end{definition}

For any $\phi\neq 0$, it is not too hard to see that functions $\phi(x-k)$ for integer $k$ form a linearly independent set --- $\sum_{finite}a_k\phi(x-k)$ has Fourier transform equal to a trigonometric polynomial times $\widehat{\phi}$, and nonzero trigonometric polynomials cannot be zero on sets of positive measure.  Thus, in particular, $\langle\phi\rangle$ is infinite dimensional as a subspace of $L^2(\R)$.  Nonetheless, $\langle\phi\rangle$ is, in some sense, one dimensional since it is completely determined by a single function.  The so-called dimension function is the appropriate notion of dimension in the context of shift-invariant spaces.  There are a number of heuristic approaches to construction this function, although they are often difficult to implement or require some advanced machinery; we describe an elementary construction.

We make the following observations: when $V$ is a shift-invariant space which is closed in $L^2(\R)$, so is its orthogonal $V^\perp$ since $T_k$ is unitary for every $k\in\Z$.  Secondly, by Zorn's Lemma, for every shift-invariant subspace $V$ we can choose a collection $\Phi:=\{\phi_i:i\in I\}$ for a countable index set $I$ such that $V=\bigoplus_{i\in I}\langle\phi_i\rangle$ so that $\mathcal{B}_{\phi_i}:=\{\phi_i(\cdot-k):k\in\Z\}$ is a Parseval frame for $\langle\phi_i\rangle$.  Note that, as a result, $\mathcal{B}_\Phi:=\bigcup_{i\in I}\mathcal{B}_{\phi_i}$ is then a Parseval frame for $V$ since the $\langle\phi_i\rangle$ are orthogonal to one another --- in fact $\mathcal{B}_\Phi$ is a special kind of Parseval frame: a so-called semiorthogonal Parseval frame.
\begin{lemma}
Suppose that both $\Phi=\{\phi_i:i\in I\}$ and $\Psi=\{\psi_j:j\in J\}$ are both Parseval frames (not necessarily semiorthogonal) which generate the same space, i.e. $\left\langle\Phi\right\rangle=\left\langle\Psi\right\rangle$, then $\sum_{i\in I}[\phi_i,\phi_i](\xi)=\sum_{j\in J}[\psi_j,\psi_j](\xi)$ almost everywhere.
\end{lemma}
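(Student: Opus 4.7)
The plan is to convert the Parseval frame identity on $V = \langle\Phi\rangle$ into a fiberwise statement on $\T$, recognize $\sum_i [\phi_i,\phi_i](\xi)$ as the sum of squared norms of a Parseval frame in a fibre Hilbert space, and then invoke the elementary identity $\sum_n \|v_n\|^2 = \dim H$ for any Parseval frame in $H$ to see that this sum depends only on $V$, not on the generating frame.

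First I would exploit the standard bracket identity: writing $[f,g](\xi) := \sum_{\ell\in\Z} \widehat{f}(\xi+\ell)\overline{\widehat{g}(\xi+\ell)}$, which is a $1$-periodic function, Plancherel together with periodization yields
\[
\langle f, \phi(\cdot-k)\rangle = \int_\T [f,\phi](\xi)\,e^{2\pi ik\xi}\,d\xi,
\]
so $\sum_{k\in\Z}|\langle f,\phi(\cdot-k)\rangle|^2 = \|[f,\phi]\|_{L^2(\T)}^2$. Summing over $i\in I$ and using the Parseval frame hypothesis for $\Phi$ gives, for every $f\in V$,
\[
\int_\T [f,f](\xi)\,d\xi = \|f\|_2^2 = \sum_{i\in I}\int_\T |[f,\phi_i](\xi)|^2\,d\xi.
\]
Next I would localize. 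Replacing $f$ by $m\bullet f$ for an arbitrary trigonometric polynomial $m$, and using the easily checked identities $[m\bullet f,m\bullet f] = |m|^2[f,f]$ and $[m\bullet f,\phi_i] = m\,[f,\phi_i]$, the last display becomes $\int_\T |m(\xi)|^2 \bigl([f,f](\xi) - \sum_i |[f,\phi_i](\xi)|^2\bigr)\,d\xi = 0$. Density of trigonometric polynomials forces the fiberwise identity $[f,f](\xi) = \sum_i |[f,\phi_i](\xi)|^2$ for a.e.\ $\xi$ and every $f\in V$.

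At this point it is natural to fiberize: the map $\tau f(\xi) := \bigl(\widehat{f}(\xi+\ell)\bigr)_{\ell\in\Z}\in\ell^2(\Z)$ is an isometry, and $V$ corresponds to a measurable field $\xi\mapsto J_V(\xi)\subset\ell^2(\Z)$ of closed subspaces (the range function of $V$). Note $\|\tau\phi(\xi)\|_{\ell^2}^2 = [\phi,\phi](\xi)$. Choosing a countable $\{f_n\}\subset V$ whose images $\tau f_n(\xi)$ are dense in $J_V(\xi)$ for a.e.\ $\xi$, the previous fibrewise identity extends from a dense set to all of $J_V(\xi)$, showing that $\{\tau\phi_i(\xi):i\in I\}$ is a Parseval frame for $J_V(\xi)$ for a.e.\ $\xi$. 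Now I invoke the trace lemma: for any Parseval frame $\{v_n\}$ in a Hilbert space $H$, $\sum_n\|v_n\|^2 = \dim H$, proved by expanding each $v_n$ in an orthonormal basis of $H$ and swapping sums. Applied fibrewise,
\[
\sum_{i\in I}[\phi_i,\phi_i](\xi) = \sum_{i\in I}\|\tau\phi_i(\xi)\|_{\ell^2}^2 = \dim J_V(\xi) \quad\text{for a.e. }\xi.
\]
The right-hand side depends only on $V$, so running the same argument for $\Psi$ yields $\sum_j [\psi_j,\psi_j](\xi) = \dim J_V(\xi)$ as well, and the two sums agree almost everywhere.

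The main obstacle is the passage from the integrated identity to the pointwise fiberwise Parseval frame statement on all of $J_V(\xi)$: one needs a measurable selection argument producing the countable family $\{f_n\}$ with fiberwise-dense images, and one should check that when $\dim J_V(\xi) = \infty$ both sums $\sum_i[\phi_i,\phi_i](\xi)$ and $\sum_j[\psi_j,\psi_j](\xi)$ are unambiguously $+\infty$ so the equality is still meaningful. Everything else reduces to Plancherel on $\R$ and $\T$ and the elementary trace identity for Parseval frames.
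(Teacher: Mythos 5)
Your argument is correct, but it takes a genuinely different route from the paper's. The paper's proof is a two-line algebraic computation: it uses the reproducing formula $f=\sum_{i\in I}[f,\phi_i]\bullet\phi_i$ to expand each $\phi_i$ in terms of $\Psi$, exploits the $1$-periodicity of the brackets to pull $[\phi_i,\psi_j](\xi)$ out of the sum over $k$, and lands on the manifestly symmetric quantity $\sum_{i\in I,\,j\in J}\left|[\phi_i,\psi_j](\xi)\right|^2$, which by the same computation run in the other direction equals $\sum_{j\in J}[\psi_j,\psi_j](\xi)$. You instead localize the integrated Parseval identity via trigonometric-polynomial multipliers, pass to the range function $\xi\mapsto J_V(\xi)$, show that $\{\tau\phi_i(\xi)\}_{i\in I}$ is fiberwise a Parseval frame, and invoke the trace identity $\sum_n\|v_n\|^2=\dim H$. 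Your route costs more: it needs the measurable field of fiber spaces, a countable family with fiberwise-dense images, and the density/continuity argument extending the fiberwise Parseval identity from that family to all of $J_V(\xi)$ --- machinery (essentially Bownik's and Helson's range-function theory) that the paper deliberately sidesteps, since it advertises an ``elementary construction'' of the dimension function. In exchange, your proof delivers strictly more: it identifies the common value of the two sums as $\dim J_V(\xi)$, an intrinsic invariant of $V$, which is precisely the content of the definition and theorem that follow this lemma in the paper; the paper's symmetry trick proves only the equality and must then separately pass to a semiorthogonal generating set to see that the dimension function is $\{0,1,2,\dots\}\cup\{\infty\}$-valued. The two minor points you flag --- the measurable selection and the interpretation of the identity when both sides are $+\infty$ --- are real but routine, and your outline handles them adequately.
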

Before proving this lemma, we recall the bracket function:
\begin{definition}
Given any two functions $\phi,\psi\in L^2(\R)$, we define the bracket $[\phi,\psi]$ to be the 1-periodic function,
\[
[\phi,\psi](\xi):=\sum_{k\in\Z}\widehat{\phi}(\xi+k)\overline{\widehat{\psi}(\xi+k)}.
\]
Frequently, one denotes $[\phi,\phi]$ by $p_\phi$.
\end{definition}
\proof
We first note that the Parseval frame property for $\Phi$ can be expressed by $f=\sum_{i\in I}[f,\phi_i]\bullet\phi_i$ for each $f\in \langle\Phi\rangle=\langle\Psi\rangle$.  Using this to write each $\phi_i$ in terms of elements of $\Psi$ and each $\psi_j$ in terms of elements of $\Phi$, we have that
\begin{align*}
\sum_{i\in I}[\phi_i,\phi_i](\xi)&=\sum_{i\in I}[\phi_i,\sum_{j\in J}[\phi_i,\psi_j]\bullet\psi_j]=\sum_{i\in I,j\in J}\sum_{k\in\Z}\widehat{\phi_i}(\xi+k)\overline{[\phi_i,\psi_j](\xi)\widehat{\psi_j}(\xi+k)}\\
&=\sum_{i\in I,j\in J}|[\phi_i,\psi_j](\xi)|^2=\sum_{j\in J}[\psi_j,\psi_j](\xi).
\end{align*}

\qed
\begin{definition}
In the context of the preceding lemma, we may define, up to null sets, the dimension function of a shift-invariant space $V$:
\[
\textrm{dim}_V(\xi):=\sum_{i\in I}[\phi_i,\phi_i](\xi)
\]
for \emph{any} choice of a Parseval frame generating set $\Phi=\{\phi_i:i\in I\}$ for $V$.
\end{definition}
\begin{remark}
There is no harm in taking $\Phi$ to be semiorthogonal as mentioned in the paragraph preceding the above theorem.  Then $[\phi_i,\phi_i](\xi)=\chi_{S_i}$, for some measurable set $S_i$.  This proves the following theorem.
\end{remark}
\begin{theorem}
\begin{enumerate}
\item The dimension function takes values in $\{0,1,2,3,...\}\cup\{\infty\}$ almost everywhere.
\item For any $\phi\in L^2(\R)$ which is not the 0 function, $\textrm{dim}_{\langle\phi\rangle}(\xi)=0\textrm{ or }1$ almost everywhere.
\end{enumerate}
\end{theorem}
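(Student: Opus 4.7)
The plan is to use the Remark preceding the theorem as the primary input. For part~(1), let $V$ be a closed shift-invariant subspace of $L^2(\R)$ and choose a semiorthogonal Parseval frame generating set $\Phi=\{\phi_i:i\in I\}$, so that $V=\bigoplus_{i\in I}\langle\phi_i\rangle$ with each $\{\phi_i(\cdot-k):k\in\Z\}$ a Parseval frame for the principal shift-invariant space $\langle\phi_i\rangle$. The Remark asserts that each $[\phi_i,\phi_i]=\chi_{S_i}$ for some measurable set $S_i\subset\T$; thus $\textrm{dim}_V(\xi)=\sum_{i\in I}\chi_{S_i}(\xi)$. A countable sum of $\{0,1\}$-valued functions takes values in $\{0,1,2,\ldots\}\cup\{\infty\}$ pointwise, which yields~(1).

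For part~(2), it suffices to exhibit a single Parseval frame generator $\widetilde{\phi}$ for $\langle\phi\rangle$; once this is accomplished, the Remark gives $\textrm{dim}_{\langle\phi\rangle}(\xi)=[\widetilde{\phi},\widetilde{\phi}](\xi)=\chi_S(\xi)$, whose values are $0$ or $1$. The natural candidate is defined on the Fourier side by
\[
\widehat{\widetilde{\phi}}(\xi):=\frac{\widehat{\phi}(\xi)}{\sqrt{[\phi,\phi](\xi)}}\,\chi_{\{[\phi,\phi]>0\}}(\xi),
\]
which ``renormalizes'' $\phi$ so that the bracket function becomes $\chi_S$, where $S=\{\xi\in\T:[\phi,\phi](\xi)>0\}$. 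Using the $1$-periodicity of $[\phi,\phi]$, a direct computation gives $[\widetilde{\phi},\widetilde{\phi}]=\chi_S$, and in particular $\widetilde{\phi}\in L^2(\R)$ with $\|\widetilde{\phi}\|_2^2=|S|$.

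The subtle step --- and the main obstacle --- is verifying $\langle\widetilde{\phi}\rangle=\langle\phi\rangle$. Writing $m(\xi)=\chi_S(\xi)/\sqrt{[\phi,\phi](\xi)}$, the multiplier is $1$-periodic but typically not bounded, so one cannot directly form $m\bullet\phi$. To show $\widetilde{\phi}\in\langle\phi\rangle$, I would truncate: set $m_N:=m\cdot\chi_{\{[\phi,\phi]\ge 1/N\}}$, which is bounded, so $m_N\bullet\phi\in\langle\phi\rangle$ via the operation defined in the introduction; dominated convergence on the Fourier side gives $m_N\bullet\phi\to\widetilde{\phi}$ in $L^2$, placing $\widetilde{\phi}$ in the closed set $\langle\phi\rangle$. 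For the reverse inclusion, I would approximate $\sqrt{[\phi,\phi]}$ in $L^2(\T)$ by trigonometric polynomials $p_n$ (valid since $[\phi,\phi]\in L^1(\T)$ forces $\sqrt{[\phi,\phi]}\in L^2(\T)$). A direct bracket computation yields
\[
\bigl\|(p_n-\sqrt{[\phi,\phi]})\bullet\widetilde{\phi}\bigr\|_2^2=\int_S\bigl|p_n(\xi)-\sqrt{[\phi,\phi](\xi)}\bigr|^2\,d\xi\to 0,
\]
and since $\sqrt{[\phi,\phi]}\bullet\widetilde{\phi}=\phi$ (noting that $\widehat{\phi}$ vanishes off $S$, because $[\phi,\phi]=0$ forces $\widehat{\phi}(\xi+k)=0$ for each $k$), the functions $p_n\bullet\widetilde{\phi}\in\langle\widetilde{\phi}\rangle$ converge to $\phi$ in $L^2$. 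Hence $\phi\in\langle\widetilde{\phi}\rangle$, completing~(2).
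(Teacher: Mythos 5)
Your proof is correct. For part (1) you follow exactly the paper's route: the paper's proof is literally the one-sentence Remark preceding the theorem, namely that a semiorthogonal Parseval frame generating set $\Phi$ has $[\phi_i,\phi_i]=\chi_{S_i}$, so $\dim_V$ is a countable sum of characteristic functions. For part (2), however, you supply something the paper only gestures at. The Remark's ``this proves the theorem'' implicitly assumes that a \emph{principal} shift-invariant space admits a \emph{single} generator whose translates form a Parseval frame --- the Zorn's lemma decomposition quoted earlier in the paper does not by itself rule out a decomposition of $\langle\phi\rangle$ into several orthogonal principal pieces, which would a priori allow the dimension function to exceed $1$. Your explicit normalization $\widehat{\widetilde{\phi}}=\widehat{\phi}\,\chi_{\{p_\phi>0\}}/\sqrt{p_\phi}$, together with the truncation argument showing $\widetilde{\phi}\in\langle\phi\rangle$ (handling the unboundedness of $1/\sqrt{p_\phi}$) and the trigonometric-polynomial approximation of $\sqrt{p_\phi}$ in $L^2(\T)$ showing $\phi\in\langle\widetilde{\phi}\rangle$, is precisely the missing single-generator lemma, and the details check out: $[\widetilde{\phi},\widetilde{\phi}]=\chi_S$, dominated convergence applies because $\int_S p_\phi^{-1}\,p_\phi=|S|<\infty$, and $\sqrt{p_\phi}\in L^2(\T)$ since $p_\phi\in L^1(\T)$. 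The only point worth a passing word is that $m_N\bullet\phi\in\langle\phi\rangle$ for bounded $1$-periodic $m_N$ rests on density of trigonometric polynomials in $L^2(\T,p_\phi\,d\xi)$, a standard fact the paper also uses tacitly. In short: same skeleton as the paper, but your part (2) is more complete than the paper's own argument.
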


\begin{definition}
An orthonormal multi-resolution analysis (MRA) is given by a countable family $V_j$ of closed subsets of $L^2(\R)$ with the following properties:
\begin{enumerate}
\item $V_j\subset V_{j+1}$ for all $j\in\Z$
\item $\phi\in V_j$ if and only if $\phi(2\cdot)\in V_{j+1}$
\item $\bigcap_{j\in\Z}V_j=\{0\}.$\footnote{Actually parts 1, 2, and 5 of this definition guarantee this (\cite[Chapter 2, Theorem 1.6]{HWWave}) but it is an important fact, so we include it in the definition.}
\item $\overline{\bigcup_{j\in\Z}V_j}=L^2(\R)$
\item There is a function $\phi\in V_0$ so that $V_0=\langle\phi\rangle$ and $\mathcal{B}_\phi:=\{\phi(\cdot-k):k\in\Z\}$ is an orthonormal basis for $V_0$.
\end{enumerate}
\end{definition}
The function $\phi$ in the definition above is referred to as the \emph{scaling function} for the MRA.  Given an orthonormal MRA, one can construct many orthonormal wavelets.  The two key objects in this construction are the so-called low- and high-pass filters.  We describe the construction and these filters briefly.  Since $V_{-1}\subset V_0$, we know that $\phi(x/2)\in V_0$, so that there is a sequence $(a_k)_{k\in\Z}\in\ell^2(\Z)$ satisfying
\[
\phi(x/2)=\sum_{k\in\Z}a_k\phi(x-k)
\]
in the $L^2(\R)$ sense.  This, together with properties of the Fourier transform, gives us the \emph{two-scale equation},
\begin{equation}
\widehat{\phi}(2\xi)=\left[\frac{1}{2}\sum_{k\in\Z}a_ke^{-2\pi ik\xi}\right]\widehat{\phi}(\xi):=m_0(\xi)\widehat{\phi}(\xi),\tag{Two-Scale Equation}\label{twoscaleequation}
\end{equation}
where $m_0$ is the 1-periodic function in the brackets in the equality above; the function $m_0$ is in $L^2(\T)$, and we refer to it as the \emph{low-pass filter associated to $\phi$}.  It is not terribly difficult to deduce that if $\{\phi(x-k):k\in\Z\}$ is an orthonormal set in $L^2(\R)$, then 
\[
\sum_{k\in\Z}|\widehat{\phi}(x+k)|^2=1.
\]
This, together with the Two-Scale Equation give
\begin{align*}
1=\sum_{k\in\Z}|\widehat{\phi}(2\xi+k)|^2&=\sum_{k\in\Z}|\widehat{\phi}(2\xi+2k)|^2+\sum_{k\in\Z}|\widehat{\phi}(2\xi+2k+1)|^2\\
&=\sum_{k\in\Z}|\widehat{\phi}(2(\xi+k))|^2+\sum_{k\in\Z}|\widehat{\phi}(2(\xi+1/2+k))|^2\\
&=|m_0(\xi)|^2\sum_{k\in\Z}|\widehat{\phi}(\xi+k)|^2+|m_0(\xi+1/2)|^2\sum_{k\in\Z}|\widehat{\phi}(\xi+1/2+k)|^2\\
&=|m_0(\xi)|^2+|m_0(\xi+1/2)|^2.
\end{align*}
This result is known as the \emph{Smith--Barnwell Equation}, a necessary condition on a low-pass filter:
\begin{equation}
|m_0(\xi)|^2+|m_0(\xi+1/2)|^2=1.\label{smithbarnwell}\tag{Smith--Barnwell Equation}
\end{equation}
Now, since $V_0\subset V_1$, we may define the function $\psi\in V_1\cap V_0^\perp$ by $\widehat{\psi}(2\xi)=e^{2\pi i\xi}\nu(2\xi)\overline{m_0(\xi+1/2)}\widehat{\phi}(\xi)$, where $\nu$ can be any measurable, 1-periodic, unimodular function.  $\psi$ can be shown to be an orthonormal wavelet.  We define by $m_1$ the 1-periodic function $m_1(\xi)=e^{2\pi i\xi}\nu(2\xi)\overline{m_0(\xi+1/2)}$ and refer to $m_1$ as the \emph{high-pass filter associated to $\psi$}.  Note that, as a result of the function $\nu$, there are many different choices of $m_1$ and $\psi$ for a given $\phi$ which give different wavelet functions.  One fundamental fact about these filters is that $m_0,m_1$ form a Smith--Barnwell pair.  More precisely, this means that the matrix 
\[
\begin{pmatrix}
m_0(\xi) & m_0(\xi+1/2)\\
m_1(\xi) & m_1(\xi+1/2)
\end{pmatrix}
\]
is unitary for almost every $\xi$.  More precise details of the above may be found in \cite[Chapter 2, Section 2]{HWWave}.
\begin{definition}
A nonzero function $\psi\in L^2(\R)$ is an orthonormal MRA wavelet if $\psi_{jk}$ forms an orthonormal basis for $L^2(\R)$ and it can be associated to an MRA as described in the procedure above.
\end{definition}
Both the Haar wavelet, which has scaling function $\phi_H=\chi_{[0,1)}$, and the Shannon wavelet, which has scaling function $\widehat{\phi_S}=\chi_{[-1/2,1/2)}$ are prototypical examples of orthonormal MRA wavelets. 

One of the main goals of the present work will be to understand the Parseval frame analogs of these orthonormal MRA wavelets.  We will give a precise definition of these analogs shortly, but we wish to point out the following major difference: if $\phi(x-k)$ is a Parseval frame for $\langle\phi\rangle$ but not an orthonormal basis, then one deduces that
\[
\sum_{k\in\Z}|\widehat{\phi}(\xi+k)|^2=\chi_{C_\phi+\Z},
\]
where $C_\phi=\textrm{supp }\widehat{\phi}$; in particular, this function will be zero on a set with positive measure.  If $D^{-1}\phi\in\langle\phi\rangle$, one can still construct a low-pass filter for $\phi$ via the Two-Scale Equation $\widehat{\phi}(2\xi)=m_0(\xi)\widehat{\phi}(\xi)$, but  the values of $m_0$ when $\xi$ is not in $C_\phi+\Z$ are irrelevant.

The authors of \cite{PSWXI} and \cite{PSWXII} began a systematic study of these issues and developed a new notion of low-pass filters, which they called generalized low-pass filters, from which to build Parseval frame analogs of MRA wavelets.  More precisely, they defined generalized low-pass filters to be 1-periodic functions which satisfy the Smith--Barnwell equation on the entirety of $\T$.  They then defined generalized scaling functions (actually, they use the somewhat clumsy terminology ``pseudoscaling function'') to be those nonzero functions $\phi\in L^2(\R)$ for which there is a generalized low-pass filter $m$ satisfying 
\[
\widehat{\phi}(2\xi)=m(\xi)\widehat{\phi}(\xi).
\]
The pair $(\phi,m)$ of generalized low-pass filter and generalized scaling function is then used to construct Parseval frame analogs of orthonormal MRA wavelets.

However, this requirement is a bit overly strong: since the low-pass filter is only a relevant quantity on $C_\phi+\Z$, it is not necessary for the low-pass filter to satisfy Smith--Barnwell everywhere.  Moreover, there is a stronger tie between the low-pass filter and the support of $\widehat{\phi}$ that deserves a bit more care to flush out.  Thus we will change, slightly, the definition of low-pass filters from \cite{PSWXI} and \cite{PSWXII} as well as the definition of generalized scaling functions.  One subtle difference between the present work and \cite{PSWXI} and \cite{PSWXII} is the following: there, the authors begin with a filter and associate to it a scaling function; however, we will begin with a scaling function and then associate to it a family of associated low-pass filters; after selecting the desired filter, one can then construct a wavelet essentially as in \cite{PSWXI} and \cite{PSWXII}.

We will discuss the technical requirements of generalized low-pass filters and generalized scaling functions later on (in Section \ref{glpf}).  In the present article, we merely refer to these functions as low-pass filters and scaling functions rather than generalized low-pass filters and generalized scaling functions.

To wit, a (generalized) scaling function together with its (generalized) low-pass filter can be used to create analogs of MRA wavelets.  As we described above, there may be, for a given scaling function, many distinct choices of $m$ which differ on sets of positive measure.  The ideas related to and constructions of orthonormal MRA wavelets then lead us to the following two possible definitions for Parseval Frame MRA wavelets.
\begin{definition}

\begin{enumerate}
\item Denote by $\pmra_1$ those functions $\psi\in\p$ so that there is a generalized scaling function $\phi$ with associated generalized low-pass filter $m$ so that
\[
\widehat{\psi}(2\xi)=e^{2\pi i\xi}\nu(2\xi)\overline{m(\xi+1/2)}\widehat{\phi}(\xi),
\]
for some measurable, unimodular, 1-periodic function $\nu$.
\item Suppose that $\psi\in\p$.  Let $V_0(\psi)=\left\langle\{D^j\psi:j<0\}\right\rangle$.  Then we say that $\psi\in\pmra_2$ if $V_0(\psi)$ is a principal shift-invariant space.  That is, there is some $\phi$ so that $V_0(\psi)=\langle\phi\rangle$.  Please note that while this condition says that translates of $\phi$ linearly span $V_0(\psi)$, there is no further constraint on how these translates generate $V_0(\psi)$. For example, it need not be the case that translates of $\phi$ form a Parseval frame for $\langle\phi\rangle$.
\end{enumerate}
\end{definition}
Both of these definitions seem reasonable from the point of view of orthonormal MRA wavelets.  In fact they are the same.
\begin{theorem}\label{pmra1=pmra2}
The above two definitions are equivalent, i.e. $\pmra_1=\pmra_2$.  If $\phi$ is the scaling function associated to $\psi$ from the first definition, then $V_0(\psi)=\langle\phi\rangle$.  If $V_0(\psi)$ is a principal shift-invariant space for some $\psi\in\pmra_2$, then one can pick a $\phi$ so that $V_0(\psi)=\langle\phi\rangle$ and $\phi$ is the scaling function for $\psi$ in the sense of the definition of $\pmra_1$.
\end{theorem}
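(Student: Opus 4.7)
The plan is to prove the two inclusions $\pmra_1\subseteq\pmra_2$ and $\pmra_2\subseteq\pmra_1$ separately. Throughout, the central tool is the Fourier-side characterization of principal shift-invariant spaces: $f\in\langle\phi\rangle$ if and only if $\widehat{f}=\tau\,\widehat\phi$ almost everywhere for some measurable $1$-periodic $\tau$.

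For $\pmra_1\subseteq\pmra_2$, start from the data $(\psi,\phi,m,\nu)$ witnessing membership in $\pmra_1$. The inclusion $V_0(\psi)\subseteq\langle\phi\rangle$ is direct: an induction on $\ell\geq 1$ combining the Two-Scale Equation $\widehat\phi(2\xi)=m(\xi)\widehat\phi(\xi)$ with the wavelet formula $\widehat\psi(2\xi)=e^{2\pi i\xi}\nu(2\xi)\overline{m(\xi+1/2)}\widehat\phi(\xi)$ produces $1$-periodic $\tau_\ell$ with $\widehat{D^{-\ell}\psi}(\xi)=\tau_\ell(\xi)\widehat\phi(\xi)$, so each $D^{-\ell}\psi\in\langle\phi\rangle$. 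Since $\langle\phi\rangle$ is closed and shift-invariant, it contains $V_0(\psi)$. For the reverse inclusion, consider the nested chain $V_j:=\langle D^j\phi\rangle$; the Two-Scale Equation gives $V_j\subseteq V_{j+1}$, and the wavelet formula places $\psi$ in the orthogonal complement $W_0:=V_1\ominus V_0$. Standard MRA-type computations then show $\{T_k\psi\}_{k\in\Z}$ is a Parseval frame for $W_0$, and by dilating, $\{\psi_{jk}\}_{k\in\Z}$ is a Parseval frame for $W_j:=D^jW_0$. Mutual orthogonality of the $W_j$'s together with the $L^2$-density coming from $\psi\in\p$ yields $V_0=\overline{\bigoplus_{j<0}W_j}$. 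Finally, the identity $\psi_{jk}=D^jT_k\psi=T_{2^{-j}k}D^j\psi$ shows each $\psi_{jk}$ with $j<0$ and $k\in\Z$ lies in $V_0(\psi)$ (since $2^{-j}k\in\Z$), so $V_0\subseteq V_0(\psi)$, hence $\phi\in V_0(\psi)$ and $\langle\phi\rangle\subseteq V_0(\psi)$.

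For $\pmra_2\subseteq\pmra_1$, suppose $V_0(\psi)=\langle\phi\rangle$. A direct computation on generators, $D^{-1}T_kD^j\psi=T_{2k}D^{j-1}\psi$, shows $D^{-1}V_0(\psi)\subseteq V_0(\psi)$ (since $j<0$ forces $j-1<0$). Hence both $D^{-1}\phi$ and $D^{-1}\psi$ belong to $\langle\phi\rangle$, and the Fourier characterization produces $1$-periodic measurable $m$ and $m_1$ with $\widehat\phi(2\xi)=m(\xi)\widehat\phi(\xi)$ and $\widehat\psi(2\xi)=m_1(\xi)\widehat\phi(\xi)$. The crux is to show $m_1$ has the form $e^{2\pi i\xi}\nu(2\xi)\overline{m(\xi+1/2)}$ for some unimodular $1$-periodic $\nu$. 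Substituting the filter identities into the Dyadic Calderón condition and the $q=1$ instance of the $t_q$ equation of Theorem~\ref{caldtq}, and telescoping the two-scale relation to obtain the auxiliary identity $|\widehat\phi(\xi)|^2=\sum_{j\geq 1}|\widehat\psi(2^j\xi)|^2$, yields the Smith--Barnwell pair relations $|m(\xi)|^2+|m_1(\xi)|^2=1$ and $m(\xi)\overline{m(\xi+1/2)}+m_1(\xi)\overline{m_1(\xi+1/2)}=0$ almost everywhere on $C_\phi+\Z$. These two identities force the quotient $e^{-2\pi i\xi}m_1(\xi)/\overline{m(\xi+1/2)}$ to be unimodular and to depend only on $2\xi$, defining $\nu$ on $C_\phi+\Z$; we then extend $\nu$ arbitrarily as a unimodular $1$-periodic function elsewhere.

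The main obstacle is this last step of Direction 2: translating the Parseval-frame conditions on $\psi$ into the Smith--Barnwell pair identities for $(m,m_1)$ and then solving algebraically for $\nu$. Direction $\pmra_1\subseteq\pmra_2$ is largely bookkeeping once one accepts the MRA-style decomposition $V_0=\overline{\bigoplus_{j<0}W_j}$, which itself rests on the mutual orthogonality of the $W_j$'s and the density-in-$L^2$ property ultimately following from the Dyadic Calderón condition for $\psi$.
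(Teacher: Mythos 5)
Your first direction's forward inclusion ($V_0(\psi)\subseteq\langle\phi\rangle$) matches the paper, but your reverse inclusion has a genuine gap: you assume that $\psi\perp V_0$, that $\{T_k\psi\}_{k\in\Z}$ is a Parseval frame for $W_0$, and that the spaces $W_j$ are mutually orthogonal. None of these hold for a general Parseval frame MRA wavelet --- they are precisely the defining features of the strictly smaller semiorthogonal class $\pso$ introduced later in the paper, and the paper explicitly warns that $V_1=\overline{W_0+V_0}$ and $V_0=\overline{\sum_{j\ge 1}W_{-j}}$ are \emph{not} orthogonal sums. (Concretely, $[\psi,\phi](2\xi)$ works out to a unimodular multiple of $\overline{m(\xi)m(\xi+1/2)}\left[p_\phi(\xi)-p_\phi(\xi+1/2)\right]$, which vanishes when $p_\phi\equiv 1$ but not in general when $p_\phi=\chi_{S_\phi}$ for a proper subset $S_\phi$.) The paper closes this direction differently: it extends the filters to all of $\T$, telescopes to get $D_\psi=p_\phi$, and compares dimension functions, $D_\psi\le\textrm{dim}_{V_0(\psi)}\le\textrm{dim}_{\langle\phi\rangle}\le 1$ with matching supports, forcing $V_0(\psi)=\langle\phi\rangle$.

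The second direction has two problems. First, the auxiliary identity $|\widehat\phi(\xi)|^2=\sum_{j\ge 1}|\widehat\psi(2^j\xi)|^2$ is false for an arbitrary generator $\phi$ of $V_0(\psi)$ (replace $\phi$ by $2\phi$: the left side quadruples, the right side does not), and without it the Smith--Barnwell relation for $(m,m_1)$ fails, so $\phi$ is not a generalized scaling function. The theorem says ``one can pick a $\phi$,'' and the choice matters: the paper first takes $\widetilde\phi$ with $p_{\widetilde\phi}=\chi_{S_{\widetilde\phi}}$ and then renormalizes to $\phi=\sqrt{D_\psi}\bullet\widetilde\phi$ so that $p_\phi=D_\psi$. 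Second, your route to the auxiliary identity --- ``telescoping the two-scale relation'' --- is circular, since the telescoping step $|\widehat\psi(2^j\xi)|^2=|\widehat\phi(2^{j-1}\xi)|^2-|\widehat\phi(2^j\xi)|^2$ already presupposes $|m|^2+|m_1|^2=1$, which is what you are trying to establish. The paper's Lemma \ref{keylemmaequivalent} instead derives the identity from the rank-one condition $\textrm{dim}_{V_0(\psi)}\le 1$: the $\ell^2(\Z)$-vectors $\Psi_j(\xi)=(\widehat\psi(2^j(\xi+k)))_{k\in\Z}$ are all scalar multiples of $\Phi(\xi)=(\widehat\phi(\xi+k))_{k\in\Z}$, and summing squared norms using $p_\phi=D_\psi$ gives the pointwise identity; only then do Smith--Barnwell and the form $m_1(\xi)=e^{2\pi i\xi}\nu(2\xi)\overline{m(\xi+1/2)}$ follow as you describe. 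You also leave the dyadic continuity requirement ($\s1$) on $\phi$ unaddressed, which is part of what it means for $\phi$ to be a scaling function in the sense of $\pmra_1$.
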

This result is a consequence of \cite{PSWXII}, but the proof therein is quite complicated.  As a service to the reader, we provide a shorter version of their proof in later sections.
\begin{notation}
In lieu of the preceding theorem, we let $\pmra:=\pmra_1=\pmra_2$.
\end{notation}

Owing to many of the non-uniquenesses hiding behind the scenes, there may be, for a given $\psi$, many $\phi$'s so that $V_0(\psi)=\langle\phi\rangle$.  These choices may allow one to vary how $V_0$ sits inside $D^1V_0$, which can dramatically impact the characteristics of the Parseval Frame MRA wavelet.  One of our first goals is to attempt to understand what happens upon varying the members of $(\psi,\phi,m_0,m_1)$ according to choices afforded us.

The authors of \cite{PSWXI} and \cite{PSWXII} discussed the following question: given a generalized scaling function $\phi$ and a 1-periodic function $m_0\in L^2(\T)$ satisfying the two-scale equation, i.e. $\widehat{\phi}(2\xi)=m_0(\xi)\widehat{\phi}(\xi)$, when is $\phi$ the scaling function for a wavelet $\psi\in\pmra$?  In their construction, they made ``arbitrary'' choices at various stages (e.g. they ignored the issue of the support set of the generalized low-pass filter by requiring the Smith--Barnwell equation to hold on the entire torus); those authors were understandably content to produce a $\pmra$ wavelet from a pair $(\phi,m_0)$ without studying the class of all possible outcomes of their choices.  This omission nagged the authors of the present work, and so this article will attempt to retread much of the work in \cite{PSWXI} and \cite{PSWXII} while giving a more careful (and, unfortunately, occasionally tedious) treatment of the impact of various choices from their construction.  This will occupy most of the second section of this paper.

This attention to detail may seem overly pedantic but actually comes with two remarkable conclusions.  In the third section of this article, we will show that the class of pairs $(\phi,m_0)$ associated to wavelets in $\pmra$ are actually the orthogonal projections (in the $L^2(\R)$ sense) of some pair $(\phi^*,m_0^*)$ where $\phi^*$ is a so-called maximal scaling function and $m_0^*$ is its associated low-pass filter (this means that $\langle\phi^*\rangle$ is not properly contained in any principal shift-invariant space) --- in particular, $\phi=\chi_{S+\Z}\bullet\phi^*$ for some set $S\subset\R$ and maximal scaling function $\phi^*$.  We will also be able to give a complete characterization of those sets $S$ so that, if $(\phi^*,m_0^*)$ is a maximal scaling function, then $\chi_{S}\bullet\phi^*$ is also associated to a $\pmra$ wavelet.

\section{Scaling Functions for $\pmra$}
\begin{definition}
Given a function $\phi\in L^2(\R)$, we make the following notations.  
\begin{enumerate}
\item Recall that $p_\phi$ denotes the weight function corresponding to $\phi$ and is given by
\[
p_\phi(\xi)=[\phi,\phi](\xi)=\sum_{k\in\Z}|\widehat{\phi}(\xi+k)|^2.
\]
Note that $p_\phi$ is 1-periodic.
\item Denote by $C_\phi$ the support of $\widehat{\phi}$, i.e.
\[
C_\phi=\{\xi\in\R:|\widehat{\phi}(\xi)|>0\}.
\]
\item Denote by $S_\phi$ the support of $p_\phi$ so that
\[
S_\phi=C_\phi+\Z.
\]
\item Denote by $\widetilde{S}_\phi$ the set given by
\[
\stilde_\phi=C_\phi+\frac{1}{2}\Z=S_\phi\cup\left(S_\phi+\frac{1}{2}\right).
\]
\end{enumerate}\label{basicdefs}
\end{definition}
\begin{remark}
Observe that
\[
\frac{1}{2}S_\phi=\frac{1}{2}C_\phi+\frac{1}{2}\Z=\left(\frac{1}{2}C_\phi+\Z\right)\cup\left(\frac{1}{2}C_\phi+\frac{1}{2}+\Z\right)
\]
and each of $\frac{1}{2}S_\phi,S_\phi\cap(S_\phi+1/2),\stilde-(1/2S_\phi),\stilde-(S_\phi\cap(S_{\phi}+1/2))$, and $\T-\stilde_\phi$ are invariant under translation by elements of $1/2\Z$.  Translations by half-integers arise immediately as a consequence of the Smith--Barnwell equations, and so the set $\widetilde{S}_\phi$ is a relatively natural object to study.
\end{remark}
\subsection{Unimodular Functions and Some of Their Properties}
\begin{definition} 
We denote by $\mathcal{A}$ the group of functions $\alpha$ on $\R$ which are unimodular in the sense that $|\alpha|\equiv 1$.  We denote by $\mathcal{U}$ the subgroup of $\mathcal{A}$ which is 1-periodic.  We let $\mathcal{A}\bullet\phi$ denote the orbit of $\phi$ under the left action of $\mathcal{A}$.
\end{definition}

Obviously, the mapping $\phi\mapsto\alpha\bullet\phi$ is unitary on $L^2(\R)$ and $\phi_1\in \mathcal{A}\bullet\phi$ if and only if $|\widehat{\phi_1}|=|\widehat{\phi}|$.

\begin{definition}
As defined previously, the maps $D^j$ correspond to the $L^2$-normalized operator $f\mapsto 2^{j/2}f(2^j\cdot)$.  We denote by $D^{j,\infty}$ the $L^\infty$-normalized map (i.e. $\|D^{j,\infty}f\|_\infty=\|f\|_\infty$) given by $D^{j,\infty}f=f(2^j\cdot)$.  Note that $D^{j,\infty}$ is still bounded on $L^2(\R)$, it is simply a non-unitary dilation on $L^2(\R)$.
\end{definition}

Clearly $D^{j,\infty}$ is an automorphism of $\mathcal{A}$ for all $j\in\Z$.

\begin{definition}
For each $j\in\Z$, write $\mathcal{U}_j:=D^{j,\infty}\mathcal{U}\equiv\{\alpha\in\mathcal{A}:\alpha\textrm{ is }1/2^j\textrm{ periodic}\}.$  In particular, $\mathcal{U}_0=\mathcal{U}$.
\end{definition}
Observe that for $\alpha\in\A$.
\[
\mathfrak{F}\left(D^{-j}(\alpha\bullet\phi)\right)=D^{j}(\alpha\widehat{\phi})=D^{j,\infty}\alpha D^j\widehat{\phi}=(D^{j,\infty}\alpha)(\mathfrak{F}(D^{-j}\phi)),
\]
so that
\[
D^{j}(\alpha\bullet\phi)=(D^{-j,\infty}\alpha)\bullet (D^j\phi)
\]
\begin{definition}
For $\alpha\in\A$, we note that 
\[
\delta_\alpha:=\frac{D^{1,\infty}\alpha}{\alpha}
\]
is also in $\A$.  Moreover, the map $\alpha\mapsto\delta_\alpha$ defines a homomorphism $\delta$ from $\A$ to itself with $\delta(\U)\subset\U$ since $\U_1=D^{1,\infty}\U\subset\U$.  Let $\M:=\{\alpha\in\A:\delta_\alpha\in\U\}$ and observe that $\M$ is a subgroup of $\A$.
\end{definition}
\begin{remark}
The importance of $\M$ arises from the fact that if $\alpha\in\M$ and $\psi=D^{-1}\phi$, then $D^{-1}(\alpha\bullet\phi)=(D^{1,\infty}\alpha)\bullet D^{-1}\phi$.  When $\phi$ is \emph{reductive} in the sense that there is a function $m_0$ on $S_\phi$ such that $\frac{1}{\sqrt{2}}D^{-1}\phi=m_0\cdot\phi\in\langle\phi\rangle$, or, equivalently,
\[
\widehat{\phi}(2\xi)=\left\{
        \begin{array}{ll}
            m_0(\xi)\widehat{\phi}(\xi) & \quad \xi\in S_\phi \\
            0 & \quad \xi\notin S_\phi
        \end{array}
    \right.,
\]
then
\[
\frac{1}{\sqrt{2}}D^{-1}(\alpha\bullet\phi)=(\delta_\alpha m_0)\bullet(\alpha\bullet\phi)=(\nu m_0)\bullet(\alpha\bullet\phi),
\]
so that $\alpha\bullet\phi$ is also reductive.  Note in passing that $m_0$ is the unique 1-periodic function on $S_\phi=C_\phi+\Z$ for which $m_0(\xi)=\frac{\widehat{\phi}(2\xi)}{\widehat{\phi}(\xi)}$ when $\xi\in C_\phi$.  Also, $\frac{1}{2}C_\phi\subset C_\phi$, so by the remark after Definition \ref{basicdefs}, we have that $\frac{1}{2}S_\phi\subseteq \stilde_\phi$.
\end{remark}
\begin{lemma}\label{lemma1.4}
One has that $\delta(\M)=\U$.  In particular, for each $\nu\in\U$, each $N>0$, and each $\alpha_0:I_0=[-N,-N/2)\cup(N/2,N]\mapsto\partial\mathbb{D}$, there is a unique $\alpha\in\M$ for which $\alpha\big|_{I_0}=\alpha_0$, and $\nu=\delta_\alpha$.  As a result, letting $\M_\nu:=\delta^{-1}(\nu)=\{\alpha\in\M:\delta_\alpha=\nu\}$, we have that $\M_\nu$ and $\U$ have cardinality equal to the power set of $\R$.
\end{lemma}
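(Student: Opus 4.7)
The plan is to exploit that $I_0 = [-N, -N/2) \cup (N/2, N]$ is a fundamental domain for the multiplicative action $\xi \mapsto 2\xi$ on $\R\setminus\{0\}$: every nonzero $\xi$ admits a unique expression $\xi = 2^j\eta$ with $j\in\Z$ and $\eta\in I_0$. The condition $\delta_\alpha = \nu$ is the functional equation $\alpha(2\xi) = \nu(\xi)\alpha(\xi)$, which together with values of $\alpha$ on $I_0$ determines $\alpha$ by iteration.

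Concretely, given $\nu\in\U$ and a measurable $\alpha_0\colon I_0 \to \partial\mathbb{D}$, I would define, for $\eta\in I_0$,
\[
\alpha(2^j\eta) = \begin{cases}
\alpha_0(\eta)\prod_{k=0}^{j-1}\nu(2^k\eta), & j > 0,\\
\alpha_0(\eta), & j = 0,\\
\alpha_0(\eta)\prod_{k=j}^{-1}\overline{\nu(2^k\eta)}, & j < 0.
\end{cases}
\]
Each factor is measurable and unimodular, the sets $\{2^j I_0\}_{j\in\Z}$ partition $\R\setminus\{0\}$, and the value of $\alpha$ at $0$ is immaterial; so $\alpha\in\A$ is well-defined and measurable. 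A short telescoping check gives $\alpha(2\xi)/\alpha(\xi) = \nu(\xi)$, hence $\delta_\alpha = \nu \in \U$, and therefore $\alpha \in \M_\nu$. Any other element of $\M_\nu$ agreeing with $\alpha_0$ on $I_0$ must satisfy the same recursion, yielding uniqueness. Since $\nu\in\U$ was arbitrary, this proves $\delta(\M) = \U$ (the reverse inclusion is immediate from the definition of $\M$).

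For the cardinality assertion, varying $\alpha_0$ injects the set of measurable unimodular functions on $I_0$ into $\M_\nu$. Already the $\{\pm 1\}$-valued choices $\chi_E - \chi_{I_0\setminus E}$, for measurable $E \subset I_0$, suffice: since $I_0$ contains a Cantor-type null set of cardinality $2^{\aleph_0}$ and Lebesgue measure is complete, the number of measurable subsets of $I_0$ is $2^{2^{\aleph_0}} = |\mathcal{P}(\R)|$, and the upper bound is trivial. The identical argument with $\T$ in place of $I_0$ gives $|\U| = |\mathcal{P}(\R)|$. The only minor subtlety is checking measurability of $\alpha$ on each $2^j I_0$, but this is immediate from measurability of dilates of $\nu$; no passage to a limit is ever required, since the defining product is finite for each fixed $j$.
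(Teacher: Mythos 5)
Your proof is correct and follows essentially the same route as the paper: decompose $\R\setminus\{0\}$ into the dyadic dilates $2^jI_0$ of the fundamental domain $I_0$, extend $\alpha_0$ uniquely via the functional equation $\alpha(2\xi)=\nu(\xi)\alpha(\xi)$ (you give the telescoping closed form where the paper phrases it as an induction), and count measurable unimodular functions on $I_0$ for the cardinality claim. Your justification of the cardinality count via a null Cantor set and completeness of Lebesgue measure is slightly more explicit than the paper's, but the argument is the same.
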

\begin{proof}
For $k\in\Z$, let $I_k=2^kI_0$.  Then $\R$ is the disjoint union of $\{0\}$ and the sets $I_k$ for $k\in\Z$.  So, each $\alpha\in\A$ is uniquely determined by its values on these sets.  Since $I_{k+1}=2I_k$ for each $k$, one has for $n\in\mathbb{N}\cup\{0\}$ that $I_n=\frac{1}{2}I_{n+1}$ and $I_{-n}=2I_{-n-1}$.  Put $\alpha\big|_{I_0}=\alpha_0$ and suppose inductively that we have defined $\alpha_k:=\alpha\big|_{I_k}$ for $|k|\le n$.  Then define $\alpha_{n+1}$ and $\alpha_{-n-1}$ by $\alpha_{n+1}(\xi)=(\nu\alpha_n)(\xi/2)$ for $\xi\in I_{n+1}$ and $\alpha_{-n-1}=(\nu\alpha_{-n})(2\xi)$ for $\xi\in I_{-n-1}$.  Obviously this inductive process gives us the unique $\alpha\in\M_\nu$ for which one has $\alpha\big|_{I_0}=\alpha_0$.  Taking into account the cardinality of the set of measurable functions from $I_0\rightarrow\partial\mathbb{D}=\{z\in\Z:|z|=1\}$, we get the desired conclusion.\qed
\end{proof}
\begin{remark}
Note that the trigonometric polynomials from $\T$ given by $e_k=e^{-2\pi ikx}$ are also the continuous homomorphisms from $\T$ into $\partial\mathbb{D}$ and hence $D^{1,\infty}e_k=e_k^2$ and $\delta_{e_k}=e_k$.  This isn't very helpful since $\{c_0e_k:c_0\in\partial\mathbb{D},k\in\Z\}$ is only a tiny subgroup of $\U$.  We can imbed $\U$ into $L^\infty(\R)$ by associating $\nu\in\U$ with its almost everywhere equivalence class in $L^\infty(\R)$.  The image $\overline{\U}$ of $\U$ under this imbedding is a closed subset of $L^\infty(\R)$.  It is very likely that $(\overline{\U},\|\cdot\|_\infty)$ is non-separable and that the groups $\U$ and $\overline{\U}$ don't have countable generating sets.  This difficulty lies at the core of such questions as ``nailing down'' $\U\cap\M_\nu=\{\nu'\in\U:\delta_{\nu'}=\nu\}$ for $\nu\in\U$ and ``constructing'' a set of coset representations for $\U/\U_1$ --- obviously every $1/2$-periodic function is also 1-periodic so $\U_1$ is a subgroup of $\U$.  As we shall see, these questions arise naturally in the theory of scaling functions for MRA wavelets.
\end{remark}

\subsection{Scaling Functions and Low-Pass Filters}
We recall that the authors of \cite{PSWXI} and \cite{PSWXII} proceeded by focusing their attention on the filters associated to an $\pmra$ wavelet.  More precisely, they define the collection of ``admissible'' filters and then construct the $\pmra$ wavelet from such a filter.  We instead will develop a theory of ``admissible'' pairs $(\phi,m_0)$ of scaling functions and their associated low-pass filter.  We begin by defining explicitly our notion of admissible pairs $(\phi,m_0)$.  We will then explain why they are equivalent to the previous work in \cite{PSWXI} and \cite{PSWXII}.

\subsubsection{Scaling Functions and their Properties}
\begin{definition}
For $\phi\in L^2(\R)$ we say that $\phi$ belongs to the set $\s$ of (dyadic) scaling functions if the following hold:
\begin{enumerate}
\item ($\s1$) $|\widehat{\phi}|$ is dyadically continuous at $0$ in the sense that
\[
\lim_{n\rightarrow\infty}|\widehat{\phi}(2^{-n}\xi)|=1\textrm{ for almost every }\xi\in\R.
\]
Note that this implies that for almost every $\xi\in\R$, there exists $n_\xi$ so that $2^{-n}\xi\in C_\phi$ for all $n\ge n_\xi$.
\item ($\s2$) $\phi$ is reductive so $\frac{1}{2}C_\phi\subseteq C_\phi$ and $\frac{1}{\sqrt{2}}D^{-1}\phi=m_0\bullet\phi$ with $m_0$ the unique 1-periodic function on $S_\phi=C_\phi+\Z$ for which
\begin{equation}\label{s2condition}
\widehat{\phi}(2\xi)=\left\{
        \begin{array}{ll}
            m_0(\xi)\widehat{\phi}(\xi) & \quad \xi\in S_\phi \\
            0 & \quad \xi\notin S_\phi
        \end{array}
    \right..
\end{equation}
Equivalently, $m_0(\xi)=\frac{\widehat{\phi}(2\xi)}{\widehat{\phi}(\xi)}$ on $C_\phi$, $|m_0(\xi)|>0$ on $\frac{1}{2}C_\phi$ and $m_0=0$ on $C_\phi-\frac{1}{2}C_\phi$.
\item ($\s3$) $|m_0|\le 1$ on $S_\phi$ and on $S_\phi\cap(S_\phi+\frac{1}{2})$, the function $m_0$ satisfies the Smith--Barnwell Equation, $|m_0(\xi)|^2+|m_0(\xi+1/2)|^2=1$.
\end{enumerate}
We call $m_0$ the low-pass filter for $\phi$ and use the notation $(\phi,m_0)\in\s$ to mean $\phi\in\s$ and $m_0$ is its low-pass filter.
\end{definition}
We now enumerate some elementary properties of $(\phi,m_0)\in\s$.
\begin{enumerate}
\item By iteration of (\ref{s2condition}), we have, on $C_\phi$
\begin{equation}\label{s2iteration}
\widehat{\phi}(\xi)=\left(\prod_{j=1}^nm_0(2^{-j}\xi)\right)\widehat{\phi}(2^{-n}\xi).
\end{equation}
By taking the modulus of both sides and letting $n\rightarrow\infty$ and using the dyadic continuity condition $(\s1)$, one has
\begin{equation}
|\widehat{\phi}(\xi)|=\prod_{j=1}^\infty|m_0(2^{-j}\xi)|.
\end{equation}
It follows that if $\widetilde{m}_0$ is any extension of $m_0$ from $S_\phi$ to $\T$ which satisfies Smith--Barnwell on $\T$, then the previous two formulae for $\widehat{\phi}$ hold when $m_0$ is replaced by $\widetilde{m}_0$.  If $\phi$ happens to be continuous at $0$, then one has
\begin{equation}\label{continuousphi}
\widehat{\phi}(\xi)=\widehat{\phi}(0)\prod_{j=1}^\infty\widetilde{m}_0(2^{-j}\xi)
\end{equation}
with $|\widehat{\phi}(0)|=1$ by ($\s2$) and $m_0(0)=\widehat{\phi}(2\cdot0)/\widehat{\phi}(0)=1$.
\item For $\alpha\in\M$ with $\nu=\delta_\alpha$, one has $|\alpha\widehat{\phi}|=|\widehat{\phi}|$, so $C_{\alpha\bullet\phi}=C_\phi$ and $S_{\alpha\bullet\phi}=S_\phi$.  Obviously, $\alpha\bullet\phi$ satisfies ($\s1$) and also satisfies ($\s2$) with $\frac{1}{\sqrt{2}}D^{-1}(\alpha\bullet\phi)=(\nu m_0)\bullet(\alpha\bullet\phi)$.  Since $|\nu|\equiv 1$, one concludes that $\nu m_0$ satisfies ($\s3$).  Thus $(\alpha\bullet\phi,\nu m_0)\in\s$ and we can express this by saying that the group $\M$ acts on $\s$ by 
\[
\alpha\cdot(\phi,m_0)=(\alpha\bullet\phi,\delta_\alpha m_0).
\]
From Lemma \ref{lemma1.4}, we have $\M_\nu=\delta^{-1}(\nu)\neq\emptyset$ for each $\nu\in\U$.  In particular, for $\nu\equiv 1$, $(\alpha\bullet\phi,m_0)\in\s$, but, in contrast to (\ref{continuousphi}), $\alpha\bullet\phi$ is obviously not determined by $m_0$.  Note that, for $\alpha,\alpha'\in\M$, we have $\alpha\bullet\phi=\alpha'\bullet\phi$ if and only if $\alpha\big|_{C_\phi}=\alpha'\big|_{C_\phi}$.  Then $\alpha\bullet\phi\in\langle\phi\rangle$ if and only if $(\alpha-\mu)\big|_{C_\phi}=0$ for some $\mu\in\M$, in which case we have
\[
\alpha\bullet\phi=\mu\bullet\phi\in\U\bullet\phi
\]
and
\[
\delta_\alpha=\delta_\mu\textrm{ on }S_\phi,
\]
which finally tells us $\alpha\cdot(\phi,m_0)=(\mu\bullet\phi,\delta_\mu m_0)$.  As we mentioned in the remark at the end of the previous section, we don't have very good control over $\{\nu\in\U:\U\cap\delta^{-1}\nu\neq\emptyset\}$.
\end{enumerate}
\subsubsection{Low-Pass Filters and their Properties}\label{glpf}
\begin{definition}
Let $C\subseteq\R$ have the property that
\begin{equation}\label{dyadicpartition}
\R=\bigcup_{j\in\Z}2^{j}C.
\end{equation}
For $m_0$ a 1-periodic function on $S=C+\Z\subseteq\T$, $m_0$ belongs to the set $LP(C)$ of low-pass filters associated with $C$ if it has the following properties:
\begin{enumerate}
\item (LP1) $\lim_{n\rightarrow\infty}\prod_{j=0}^\infty|m_0(2^{-j}\xi)|=1$ on $S$.
\item (LP2) $|m_0|>0$ on $C/2$ and $0$ on $C-C/2$.
\item (LP3) $|m_0|\le1$ on $S$ and the Smith--Barnwell equation holds on $S\cap(S+1/2)$.
\end{enumerate}
\end{definition}
\begin{remark}
This is, essentially, a reformulation of the definition from \cite{PSWXI} and \cite{PSWXII}.  We should also mention that the problem of characterizing low-pass filters has been studied by other authors, e.g. \cite{cohen} and \cite{PapSW}. When $(\phi,m_0)\in\s$, we know from the previous section that $C_\phi$ satisfies (\ref{dyadicpartition}) and $m_0\in LP(C_\phi)$.  Conversely, when $m_0\in LP(C)$, it is shown in \cite{PSWXI} that (LP1) and (LP3) imply the existence of $(\phi_{|m_0|},|m_0|)$ for which we can assume 
\[
\widehat{\phi_{|m_0|}}(\xi)=\prod_{j=1}^\infty|m_0(2^{-j}\xi)|\textrm{ for all }\xi\in\R,
\]
and that $C=C_{\phi_{|m_0|}}$.  We can then choose $\nu\in\U$ for which $m_0=\nu|m_0|$.  By Lemma \ref{lemma1.4}, $\M_\nu=\delta^{-1}(\nu)\neq\emptyset$ and, from the previous subsection, $(\alpha\bullet\phi_{|m_0|},m_0)\in\s$ for each $\alpha\in\M_\nu$.  Now suppose we have some $\phi\in L^2(\R)$ for which $(\phi,m_0)\in\s$.  From the previous subsection, $C=C_\phi$ and $|\widehat{\phi}|=\widehat{\phi_{|m_0|}}$.  We will show that there is a unique $\alpha\in\M_\nu$ for which $(\phi,m_0)=\alpha\cdot(\phi_{|m_0|},|m_0|)=(\phi,m_0)$.  To see this, define $\alpha$ on $C=C/2\cup(C\backslash C/2)$ to be $\frac{\widehat{\phi}}{|\phi|}$.  By (\ref{dyadicpartition}), we know that $\R$ is the disjoint union of $C/2$ and the sets $B_n=2^n(C\backslash C/2)$, $n\in\{0\}\cup\N$.  We define $\alpha$ inductively on $\R\backslash C=\bigcup_{n\in\N}B_n$ by $\alpha(\xi)=\nu(\xi)\alpha(\xi)$ for $\xi\in B_n$, noting that $1/2 B_n=B_{n-1}$.  Then $\delta_\alpha=\nu$ on $\R\backslash C/2=\bigcup_{n\in\{0\}\cup\N}B_n$.  To see that $\delta_\alpha=\nu$ on $C/2$, we use the fact that $|m_0||\widehat{\phi}|>0$ on $C/2$ and, for each $\xi\in C/2$,
\begin{align*}
\left((\delta_\alpha-\nu)|m_0||\widehat{\phi}|\right)(\xi)&=\frac{1}{\alpha(\xi)}\left((D^{1,\infty}\alpha)|m_0\widehat{\phi}|-\alpha\nu|m_0\widehat{\phi}|\right)(\xi)\\
&=\frac{1}{\alpha(\xi)}\left(\alpha(2\xi)|\widehat{\phi}|(2\xi)-(m_0\widehat{\phi})(2\xi)\right)\\
&=\frac{1}{\alpha(2\xi)}\left(\widehat{\phi}(2\xi)-\widehat{\phi}(2\xi)\right)\\
&=0.
\end{align*}
This shows that $\delta_\alpha=\nu$ on $\R$, so $\alpha\in\M_{\nu}$ and $\alpha\cdot(\phi_{|m_0|},|m_0|)=(\phi,m_0)$.  Uniquess of $\alpha$ satisfying these conditions is obvious, i.e. in order to have $\alpha\bullet\phi_{|m_0|}=\phi$ it must be that $\alpha=\frac{\widehat{\phi}}{|\widehat{\phi}|}$ on $C=C_\phi$, and so on.
\end{remark}
\begin{remark}
We present a side question: if $C\subset \R$ satisfies (\ref{dyadicpartition}) with $C/2\subseteq C$, what, if any, additional constraints on $C$ are needed to conclude that $LP(C)\neq\emptyset$?  Equivalently, can we construct 1-periodic functions $m_0$ on $S=C+\Z$ satisfying (LP1), (LP2), (LP3) and, if so, how much ``latitude'' do we have in such a construction?
\end{remark}

\subsubsection{The equivalence of $\pmra_1$ and $\pmra_2$\label{section223}}
We now provide a proof of Theorem \ref{pmra1=pmra2}.  This is essentially a shorter version of the argument given for \cite[Theorem 3.8]{PSWXII}.
\begin{enumerate}
\item First, suppose that $\psi\in\pmra_1$.  In other words, we assume that $\psi\in\p$ and that there is a pair $(\phi,m_0)\in\s$ and a unimodular, 1-periodic function $\nu$ so that $\widehat{\psi}(2\xi)=e^{2\pi i\xi}\nu(2\xi)\overline{m_0(\xi+1/2)}\widehat{\phi}(\xi):=m_1(\xi)\widehat{\phi}(\xi)$.  Since $m_0$ is bounded and 1-periodic, it has well-defined Fourier coefficients $(a_k)\in\ell^2(\Z)$.  This means that $\widehat{\phi}(2\xi)$ can be written as $\sum_{k\in\Z}a_ke^{2\pi ik\xi}\widehat{\phi}(\xi)$, so that $\frac{1}{2}\phi(x/2)=\sum_{k\in\Z}a_k\phi(x-k)$, and so $D^{-1}\phi\in\langle\phi\rangle$.  By iteration, this gives $D^{-j}\phi\in\langle\phi\rangle$ for all $j\le 0$.  A similar argument shows that $D^{-j}\psi\in\langle\phi\rangle$ for all $j<0$.  But this means that $V_0(\psi)\subset\langle\phi\rangle$.  So $V_0(\psi)\subseteq \langle\phi\rangle$, which says that $\psi\in\pmra_2$.  

The fact that the $\subseteq$ can be replaced by equality in the last sentence above is a bit more delicate.  To that end, we remark that, since $(\phi,m_0)\in \s$, we have that $\widehat{\phi}(2\xi)=m_0(\xi)\widehat{\phi}(\xi)$ and $|m_0(\xi)|^2+|m_1(\xi)|^2=1$ on $S_\phi$.  However, it will be more convenient to extend the filters $m_0$ and $m_1$ to the whole torus.  To do this, define $m_0(\xi+1/2)=\nu(\xi)\sqrt{1-|m_0(\xi)|^2}$ on $S_\phi\backslash(S_\phi+1/2)$ and make an arbitrary choice on the $\T\backslash(S_\phi\cup(S_\phi+1/2))$ in such a way that $m_0$ satisfies the Smith--Barnwell equation on all of $\T$ --- this is possible since $\T\backslash(S_\phi\cup(S_\phi+1/2))$ is invariant under translation by half-integers.  Thus we may assume that $\widehat{\phi}(2\xi)=m_0(\xi)\widehat{\phi}(\xi)$ and $\widehat{\psi}(2\xi)=m_1(\xi)\widehat{\phi}(\xi)$ for almost every $\xi\in\R$.

Now, observe that
\begin{align*}
\widehat{\psi}(2^j\xi)\overline{\widehat{\psi}(2^j(\xi+k))}&=|m_1(2^{j-1}\xi)|^2\widehat{\phi}(2^{j-1}\xi)\overline{\widehat{\phi}(2^{j-1}(\xi+k))}\\
&=(1-|m_0(2^{j-1}\xi)|^2)\widehat{\phi}(2^{j-1}\xi)\overline{\widehat{\phi}(2^{j-1}(\xi+k))}\\
&=\widehat{\phi}(2^{j-1}\xi)\overline{\widehat{\phi}(2^{j-1}(\xi+k))}-\widehat{\phi}(2^{j}\xi)\overline{\widehat{\phi}(2^{j}(\xi+k))}.
\end{align*}
Plugging in $k=0$, and summing over $j\ge0$, one gets
\[
\sum_{j=1}^\infty|\widehat{\psi}(2^j\xi)|^2=|\widehat{\phi}(\xi)|^2.
\]
(Here one must observe that, for almost every $\xi$, $\lim_{n\rightarrow\infty}|\widehat{\phi}(2^n\xi)|=0$, e.g. \cite[Lemma 2.8]{PSWXI}).  

The conclusion of the first paragraph above is that $\textrm{dim}_{V_0(\psi)}(\xi)\le \textrm{dim}_{\langle\phi\rangle}(\xi)\le 1$, where $\textrm{dim}$ denotes the usual dimension function for a shift-invariant space defined in the introduction; in particular, this function is integer-valued almost everywhere.  The conclusion of the last paragraph above, in the language of \cite{PSWXI} and \cite{PSWXII}, is that $D_\psi=p_\phi$, where
\[
D_\psi=\sum_{k\in\Z}\sum_{j\ge 1}|\widehat{\psi}(2^j(\xi+k))|^2.
\]
Thus in particular, $D_\psi$ and $p_\phi$ are supported on the same sets.  Since $D_\psi\le \textrm{dim}_{V_0(\psi)}(\xi)$, one concludes that, $\textrm{dim}_{V_0(\psi)}=\chi_{S_\phi}$.  This implies that $V_0(\psi)=\langle\phi\rangle$.

\item Now suppose that $\psi\in\pmra_2$.  Then $\textrm{dim}_{V_0(\psi)}\le 1$; in particular it takes on only the values 0 and 1 a.e.  Then one can choose a $\widetilde{\phi}$ so that $V_0(\psi)=\langle\widetilde{\phi}\rangle$ and $\textrm{dim}_{V_0(\psi)}=p_{\widetilde{\phi}}=\chi_{S_{\widetilde{\phi}}}$.  Motivated by our discussion in part 1 of this proof, we define $\phi=\sqrt{D_\psi}\bullet\tilde{\phi}=(\sqrt{D_\psi}\widehat{\widetilde{\phi}})^\vee$.  Then one has, ultimately, that $p_\phi=D_\psi p_{\widetilde{\phi}}=D_\psi$ and $V_0(\psi)=\langle\phi\rangle$.

From here it is not too hard to see, as before, that there are 1-periodic functions $m_0$ and $m_1$ defined for all $\xi\in S_\phi$, so that $\widehat{\phi}(2\xi)=m_0(\xi)\widehat{\phi}(\xi)$ and $\widehat{\psi}(2\xi)=m_1(\xi)\widehat{\phi}(\xi)$.  The result then follows by the following three lemmas:
\begin{lemma}\label{keylemmaequivalent}
The above conditions guarantee that for each $\xi\in S_\phi$,
\[
|\widehat{\phi}(\xi)|^2=\sum_{j=1}^\infty|\widehat{\psi}(2^{j}\xi)|^2
\]
and
\[
|m_0(\xi)|^2+|m_1(\xi)|^2=1.
\]
\end{lemma}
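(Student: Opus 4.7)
The plan is to prove the Smith--Barnwell identity $|m_0|^2 + |m_1|^2 = 1$ first, and then derive the Calder\'on-type formula by telescoping. The central move is to convert the periodized identity $p_\phi = D_\psi$ into a pointwise equation in $m_0$ and $m_1$. Iterating the two-scale relations $\widehat{\phi}(2\eta) = m_0(\eta)\widehat{\phi}(\eta)$ and $\widehat{\psi}(2\eta) = m_1(\eta)\widehat{\phi}(\eta)$ inside $D_\psi$, and using that $m_0, m_1$ are $1$-periodic while $2^{j-1}k \in \Z$ for $j \ge 1$, all integer-shift corrections vanish and the $\xi$- and $k$-dependences decouple:
\[
\sum_{k \in \Z}|\widehat{\psi}(2^j(\xi+k))|^2 = |m_1(2^{j-1}\xi)|^2 \prod_{i=0}^{j-2}|m_0(2^i\xi)|^2 \cdot p_\phi(\xi).
\]
Summing over $j \ge 1$ and re-indexing $\ell = j - 1$ gives $D_\psi(\xi) = p_\phi(\xi) \cdot Q(\xi)$, where
\[
Q(\xi) := \sum_{\ell \ge 0}|m_1(2^\ell \xi)|^2 \prod_{i=0}^{\ell-1}|m_0(2^i\xi)|^2.
\]
The hypothesis $p_\phi = D_\psi$ then forces $Q(\xi) = 1$ on $S_\phi$.

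Smith--Barnwell now follows from the self-similar recursion $Q(\xi) = |m_1(\xi)|^2 + |m_0(\xi)|^2\,Q(2\xi)$, obtained by pulling off the $\ell = 0$ term. When $\xi \in S_\phi$ and $2\xi \in S_\phi$, both $Q$-values equal $1$ and the recursion immediately gives $|m_0(\xi)|^2 + |m_1(\xi)|^2 = 1$. When $\xi \in C_\phi$ but $2\xi \notin S_\phi$, the vanishing $p_\phi(2\xi) = 0$ yields $\widehat{\phi}(2\xi) = 0$, and since $\widehat{\phi}(\xi) \ne 0$, the two-scale equation forces $m_0(\xi) = 0$; the recursion then collapses to $|m_1(\xi)|^2 = Q(\xi) = 1$, and Smith--Barnwell still holds. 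The remaining $\xi \in S_\phi \setminus C_\phi$ is handled by $1$-periodicity of $|m_0|^2$ and $|m_1|^2$.

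With Smith--Barnwell in hand, multiplying through by $|\widehat{\phi}(2^{j-1}\xi)|^2$ and invoking the two-scale relations produces the telescoping identity $|\widehat{\psi}(2^j\xi)|^2 = |\widehat{\phi}(2^{j-1}\xi)|^2 - |\widehat{\phi}(2^j\xi)|^2$. Summing over $j \ge 1$ and using $|\widehat{\phi}(2^N\xi)| \to 0$ a.e.\ (cited from \cite[Lemma 2.8]{PSWXI} in part 1) yields the desired pointwise formula on $C_\phi$. For $\xi \in S_\phi \setminus C_\phi$ both sides vanish: the left trivially, and the right because $\widehat{\psi}(2^j\xi) \ne 0$ would require $2^{j-1}\xi \in C_\phi$, but iterating $\frac{1}{2}C_\phi \subseteq C_\phi$ shows this forces $\xi \in C_\phi$, a contradiction. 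The main obstacle is the factorization $D_\psi = p_\phi \cdot Q$, which is what bridges the periodized identity $p_\phi = D_\psi$ to the pointwise statement $Q \equiv 1$ on $S_\phi$ and hence (through the recursion) to pointwise Smith--Barnwell; the delicate edge case $2\xi \notin S_\phi$ is the other place where care is needed, and is neatly resolved by the forced vanishing of $m_0$.
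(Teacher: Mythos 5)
Your proposal is correct, and it reaches the two identities by essentially the same mechanism as the paper but in the opposite order and with the proportionality constants made explicit. The paper's sketch works with the $\ell^2(\Z)$-valued fibers $\Phi(\xi)=(\widehat{\phi}(\xi+k))_k$ and $\Psi_j(\xi)=(\widehat{\psi}(2^j(\xi+k)))_k$: since each $\Psi_j(\xi)=c_j(\xi)\Phi(\xi)$, one gets $\|\Psi_j(\xi)\|^2=|c_j(\xi)|^2 p_\phi(\xi)=|c_j(\xi)|^2 D_\psi(\xi)$, sums over $j$ to conclude $\sum_j|c_j(\xi)|^2=1$, reads off the Calder\'on-type identity $\sum_j|\widehat{\psi}(2^j\xi)|^2=|\widehat{\phi}(\xi)|^2$ first, and then obtains Smith--Barnwell by evaluating that identity at $\eta$ and at $2\eta$ and subtracting. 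Your $Q(\xi)$ is exactly $\sum_j|c_j(\xi)|^2$ with $c_j(\xi)=m_1(2^{j-1}\xi)\prod_{i=0}^{j-2}m_0(2^i\xi)$ written out, and your factorization $D_\psi=p_\phi\cdot Q$ is the same normalization step; the genuine difference is that you extract Smith--Barnwell directly from the self-similar recursion $Q(\xi)=|m_1(\xi)|^2+|m_0(\xi)|^2Q(2\xi)$ and only then telescope to get the Calder\'on identity. Your route has two small advantages: it isolates and resolves the edge case $2\xi\notin S_\phi$ (forced vanishing of $m_0(\xi)$), which the paper's subtraction step also silently relies on when it applies the first identity at $2\eta$; and, had you summed $|c_j(\xi)|^2|\widehat{\phi}(\xi)|^2=Q(\xi)|\widehat{\phi}(\xi)|^2$ directly instead of telescoping, you could have dispensed with the input $|\widehat{\phi}(2^N\xi)|\to 0$ from \cite{PSWXI} altogether. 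The paper's version is slightly cleaner in that it never needs the explicit product formula for $c_j$. One point to make explicit in a polished write-up: the iteration underlying $D_\psi=p_\phi\cdot Q$ uses the two-scale relations at points $2^i(\xi+k)$ that may fall outside $S_\phi$; this is harmless because $\widehat{\phi}$ and $\widehat{\psi}(2\,\cdot)$ vanish wherever $\widehat{\phi}$ does (both $D^{-1}\phi$ and $D^{-1}\psi$ lie in $\langle\phi\rangle$), so the identity holds trivially there with $m_0$ extended arbitrarily.
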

\begin{lemma}
Lemma \ref{keylemmaequivalent} and the discussion preceding it imply the following:
\begin{itemize}
\item $|\widehat{\phi}|$ satisfies the dyadic continuity condition at 0.
\item For almost every $\xi\in S_\phi\cap (S_{\phi}+1/2)$, the matrix 
\[
\mathcal{M}(\xi):=\begin{pmatrix}
m_0(\xi) & m_0(\xi+1/2)\\
m_1(\xi) & m_1(\xi+1/2)
\end{pmatrix}
\]
\end{itemize}
is unitary and hence $m_0$ and $m_1$ satisfy the Smith--Barnwell equations on $S_\phi\cap (S_\phi+1/2)$.  Also, we may choose $\mu\in\U$ for which
\[
m_1(\xi)=e^{2\pi i\xi}\mu(2\xi)\overline{m_0(\xi+1/2)}
\]
on $S_\phi\cap (S_\phi+1/2)$.

\end{lemma}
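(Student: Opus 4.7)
The plan is to establish the three assertions in turn, using the two identities from Lemma \ref{keylemmaequivalent} together with the dyadic Calder\'{o}n condition and the $t_q$ equations of Theorem \ref{caldtq}.

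For the dyadic continuity of $|\widehat{\phi}|$ at $0$, Lemma \ref{keylemmaequivalent} gives
\[
|\widehat{\phi}(2^{-n}\xi)|^2 = \sum_{j\ge 1}|\widehat{\psi}(2^{j-n}\xi)|^2 = \sum_{k\ge 1-n}|\widehat{\psi}(2^k\xi)|^2,
\]
which monotonically increases to $\sum_{k\in\Z}|\widehat{\psi}(2^k\xi)|^2=1$ a.e.\ as $n\to\infty$ by the Calder\'{o}n condition.

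For the unitarity of $\mathcal{M}(\xi)$, the two diagonal entries of $\mathcal{M}^*\mathcal{M}$ equal $1$ on $S_\phi\cap (S_\phi+1/2)$ immediately from $|m_0|^2+|m_1|^2=1$ on $S_\phi$ (Lemma \ref{keylemmaequivalent}) and the $1/2$-shift of this relation. The genuine obstacle is the off-diagonal vanishing $m_0(\xi)\overline{m_0(\xi+1/2)}+m_1(\xi)\overline{m_1(\xi+1/2)}=0$. I would extract it from the $t_{2k+1}$ equation at $\eta=2\xi$. Using $\widehat{\psi}(2\eta)=m_1(\eta)\widehat{\phi}(\eta)$ and the $1$-periodicity of $m_1$, the $j=0$ term of $t_{2k+1}(2\xi)$ equals $m_1(\xi)\overline{m_1(\xi+1/2)}\widehat{\phi}(\xi)\overline{\widehat{\phi}(\xi+k+1/2)}$. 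For $j\ge 1$, the shift $2^{j-1}(2k+1)\in\Z$, so $m_0,m_1$ at the shifted arguments are unchanged; inserting $|m_1|^2=1-|m_0|^2$ rewrites the $j$-th summand as a telescoping difference in $a_j:=\widehat{\phi}(2^j\xi)\overline{\widehat{\phi}(2^j\xi+2^{j-1}(2k+1))}$, which collapses (the tail $a_N\to 0$ a.e.\ using the decay of $|\widehat{\phi}(2^N\cdot)|$) to $a_1=m_0(\xi)\overline{m_0(\xi+1/2)}\widehat{\phi}(\xi)\overline{\widehat{\phi}(\xi+k+1/2)}$. Summing the two contributions of $t_{2k+1}(2\xi)=0$ yields
\[
[m_0(\xi)\overline{m_0(\xi+1/2)}+m_1(\xi)\overline{m_1(\xi+1/2)}]\,\widehat{\phi}(\xi)\overline{\widehat{\phi}(\xi+k+1/2)}=0.
\]
For fixed $\xi_0\in S_\phi\cap(S_\phi+1/2)$, I would choose integers $k_1,k_2$ with $\widehat{\phi}(\xi_0+k_1)$ and $\widehat{\phi}(\xi_0+k_2+1/2)$ both nonzero (which exist by definition of $S_\phi$) and apply the displayed identity at $\xi=\xi_0+k_1$ with $k=k_2-k_1$. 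The $1$-periodicity of $m_0,m_1$ then transfers the resulting orthogonality back to $\xi_0$.

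With $\mathcal{M}(\xi)$ unitary a.e.\ on $S_\phi\cap(S_\phi+1/2)$, the relation $\mathcal{M}^*=\mathcal{M}^{-1}$ yields $m_1(\xi)=-d(\xi)\overline{m_0(\xi+1/2)}$, where $d(\xi):=\det\mathcal{M}(\xi)$ is unimodular there. I would define $\mu(\eta):=-e^{-\pi i\eta}d(\eta/2)$ on $2(S_\phi\cap(S_\phi+1/2))$ and extend arbitrarily to a measurable unimodular function on $\T$. Well-definedness modulo $\Z$ splits into two cases: for $\eta_1=\eta_2+2\ell$, both $d(\cdot/2)$ and $e^{-\pi i\cdot}$ are invariant; for $\eta_1=\eta_2+2\ell+1$, the $1$-periodicity of $m_0,m_1$ forces $d(\eta_2/2+1/2)=-d(\eta_2/2)$, exactly canceling the extra factor $e^{-\pi i}=-1$. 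Thus $\mu\in\U$, and unwinding the definition gives $m_1(\xi)=e^{2\pi i\xi}\mu(2\xi)\overline{m_0(\xi+1/2)}$ on $S_\phi\cap(S_\phi+1/2)$. The main difficulty, as flagged above, is the off-diagonal step of unitarity; everything else is essentially bookkeeping.
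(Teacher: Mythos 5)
Your proof is correct and follows exactly the route the paper indicates: the paper itself only remarks that ``the second lemma comes down to using the alternating summation idea \ldots{} in the context of the Calder\'{o}n and $t_q$ conditions,'' and your argument is a faithful, complete execution of that telescoping-plus-$t_{2k+1}$ strategy (with the Calder\'{o}n condition handling dyadic continuity and the determinant computation handling the choice of $\mu$). The only detail worth flagging is the standard measurability bookkeeping in choosing the integers $k_1,k_2$ as functions of $\xi_0$, which is routine.
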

\begin{lemma}
There is a unique extension of $m_0$ and $m_1$ to $S_\phi\cup (S_\phi+1/2)$.
\end{lemma}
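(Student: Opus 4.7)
The plan is to recognize that since $m_0$ and $m_1$ are already 1-periodic and defined on $S_\phi$, an extension to $S_\phi \cup (S_\phi + 1/2)$ reduces to specifying their values on the $\Z$-invariant gap set $D := (S_\phi + 1/2) \setminus S_\phi$. For any $\eta \in D$, write $\eta = \xi + 1/2$ with $\xi \in S_\phi \setminus (S_\phi + 1/2)$, so that $m_0(\xi)$ and $m_1(\xi)$ are already in hand. The previous lemma provides the structural formula $m_1(\xi) = e^{2\pi i\xi}\mu(2\xi)\overline{m_0(\xi+1/2)}$; I would require this formula to persist on the enlarged domain by \emph{defining}
\[
m_0(\xi+1/2) := e^{2\pi i\xi}\mu(2\xi)\overline{m_1(\xi)}, \qquad m_1(\xi+1/2) := -e^{2\pi i\xi}\mu(2\xi)\overline{m_0(\xi)},
\]
where the second formula is obtained by applying the same $\mu$-factorization at the shifted point and using $e^{2\pi i(\xi+1/2)} = -e^{2\pi i\xi}$ together with the 1-periodicity of $\mu$.

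Next I would verify the extension has the required properties. The identities $|m_0(\xi+1/2)| = |m_1(\xi)|$ and $|m_1(\xi+1/2)| = |m_0(\xi)|$, combined with the relation $|m_0(\xi)|^2 + |m_1(\xi)|^2 = 1$ from Lemma \ref{keylemmaequivalent}, yield the Smith-Barnwell equation throughout $S_\phi \cup (S_\phi + 1/2)$. Column-orthogonality of $\mathcal{M}(\xi)$ is the one-line cancellation
\[
m_0(\xi)\overline{m_0(\xi+1/2)} + m_1(\xi)\overline{m_1(\xi+1/2)} = e^{-2\pi i\xi}\overline{\mu(2\xi)}\bigl(m_0(\xi)m_1(\xi) - m_1(\xi)m_0(\xi)\bigr) = 0,
\]
so $\mathcal{M}(\xi)$ is unitary on the enlarged domain. 1-periodicity of the extended $m_0$ and $m_1$ is inherited from that of $m_0$, $m_1$, $\mu$ together with the $\Z$-invariance of $D$.

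Uniqueness is then automatic: any extension that continues to respect the $\mu$-factorization $m_1(\xi) = e^{2\pi i\xi}\mu(2\xi)\overline{m_0(\xi+1/2)}$ on the enlarged domain must invert to give precisely the displayed value of $m_0(\xi+1/2)$, after which the same factorization applied at $\xi + 1/2$ pins down $m_1(\xi+1/2)$. The main (and only) delicate point I expect is bookkeeping: the formula for $m_0$ on $D$ invokes $\mu(2\xi)$ for $\xi \in S_\phi \setminus (S_\phi + 1/2)$, a region outside the a priori constraints on $\mu$ imposed by the previous lemma, so ``uniqueness'' is to be understood relative to a fixed choice of $\mu \in \U$ extending the one produced there. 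Once that choice is made, the preceding formulas determine $m_0$ and $m_1$ on the extended domain unambiguously.
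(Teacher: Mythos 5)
Your construction is sound, but note that the paper offers no proof of this lemma to compare against: immediately after stating the three lemmas it declares the proofs of the latter two ``fairly routine'' and moves on. Your argument is the natural way to fill that gap and it checks out. Setting $m_0(\xi+1/2)=e^{2\pi i\xi}\mu(2\xi)\overline{m_1(\xi)}$ and $m_1(\xi+1/2)=-e^{2\pi i\xi}\mu(2\xi)\overline{m_0(\xi)}$ for $\xi\in S_\phi\setminus(S_\phi+1/2)$ is well defined (the two representatives $\eta\pm 1/2$ of a point $\eta$ in the gap set give the same value by $1$-periodicity of $m_0$, $m_1$, and $\mu$), the identity $|m_0(\xi)|^2+|m_1(\xi)|^2=1$ on $S_\phi$ from Lemma \ref{keylemmaequivalent} converts directly into Smith--Barnwell on $S_\phi\cup(S_\phi+1/2)$, and your column-orthogonality cancellation together with the column normalizations $|m_0(\xi)|^2+|m_1(\xi)|^2=|m_0(\xi+1/2)|^2+|m_1(\xi+1/2)|^2=1$ gives unitarity of $\mathcal{M}(\xi)$ on the enlarged domain; one also checks, as you implicitly do, that the factorization $m_1(\xi)=e^{2\pi i\xi}\mu(2\xi)\overline{m_0(\xi+1/2)}$ then persists on all of $S_\phi\cup(S_\phi+1/2)$, which is what the passage to $\pmra_1$ actually requires. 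Your closing caveat is the one substantive observation: only the moduli $|m_0(\xi+1/2)|=|m_1(\xi)|$ and $|m_1(\xi+1/2)|=|m_0(\xi)|$ are forced by Smith--Barnwell and unitarity, while the phases depend on the values of $\mu$ on $2\left(S_\phi\setminus(S_\phi+1/2)\right)$, which the previous lemma leaves unconstrained. So ``unique'' must be read relative to a fixed $\mu\in\U$, exactly as you say; this is consistent with the paper's own remark following Definition \ref{fpcdef}, where the analogous unimodular freedoms $\mu_0,\mu_1$ in passing from $LP(C)$ to $FP(C)$ are made explicit. The imprecision is in the lemma's statement, not in your proof, and any single choice of $\mu$ suffices for the conclusion that $\psi\in\pmra_1$.
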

Clearly the three lemmas together give us that $\psi\in\pmra_1$.  The proofs of the latter two are fairly routine --- the second lemma comes down to using the alternating summation idea from the previous section in the context of the Calder\'{o}n and $t_q$ conditions.  

\textbf{Sketch of proof of Lemma \ref{keylemmaequivalent}}: We provide only the following sketch. Consider the $\ell^2(\Z)$-valued map $\Phi(\xi)=(\widehat{\phi}(\xi+k))_{k\in\Z}$ and, for each $j=1,2,3,...$ $\Psi_j(\xi)=(\widehat{\psi}(2^j(\xi+k))_{k\in\Z}$.  By our assumptions, $\C\Phi(\xi)=\textrm{span}\{\Phi_{j}(\xi):j=1,2,3,...\}$.  Then for each such $j$, we have a $c_j(\xi)\in\C$ for which $\Phi_j(\xi)=c_j(\xi)\Phi(\xi)$, and so
\[
\|\Phi_j\|^2_{\ell^2(\Z)}=|c_j(\xi)|^2p_\phi(\xi)=|c_j(\xi)|^2D_\psi(\xi).
\]
Thus
\[
|c_j(\xi)|^2=\frac{\|\Phi_j(\xi)\|_{\ell^2(\Z)}}{D_\psi(\xi)}.
\]
Because $D_\psi(\xi)=\sum_{j=1}^\infty\|\Psi_j(\xi)\|^2_{\ell^2(\Z)}$, this gives us that $|\widehat{\psi}(2^j\xi)|^2=|c_j(\xi)|^2|\widehat{\phi}(\xi)|^2$ for each $j$.  Thus
\[
\sum_{j=1}^\infty|\widehat{\psi}(2^j\xi)|^2=\left(\sum_{j=1}^\infty\|\Psi_j(\xi)\|_{\ell^2(\Z)}^2\right)\frac{|\widehat{\phi}(\xi)|^2}{D_\psi(\xi)}=|\widehat{\phi}(\xi)|^2.
\]
Now, if we pick $\xi\in S_\phi$, then for some $k\in\Z$, $\xi+k=\eta\in C_\phi$.  By 1-periodicity of $m_0$ and $m_1$ on $S_\phi$,
\begin{align*}
(|m_0(\xi)|^2+|m_1(\xi)|^2)|\widehat{\phi}(\eta)|^2&=|\widehat{\phi}(2\eta)|^2+|\widehat{\psi}(2\eta)|^2=\sum_{j=2}^\infty |\widehat{\psi}(2^j\eta)|^2+|\widehat{\psi}(2\eta)|^2=\sum_{j=1}^\infty|\widehat{\psi}(2^j\eta)|^2=|\widehat{\phi}(\eta)|^2.
\end{align*}
From this, we get the desired result on $S_\phi$.

\end{enumerate}

\subsubsection{The passage between $\s$ and $\pmra$}
\begin{definition}\label{fpcdef}
Let $C$ be a subset of $\R$ for which $LP(C)\neq\emptyset$ and let $S=C+\Z$ and $\tilde{S}=S\cup(S+1/2)$.  The set $FP(C)$ of low- and high-pass filters associated with $C$ consists of all pairs $(m_0,m_1)$ of 1-periodic functions on $\tilde{S}$ for which $m_0\big|_S\in LP(C)$ and the matrix 
\[
M_{m_0,m_1}(\xi):=\begin{pmatrix}
m_0(\xi) & m_0(\xi+1/2)\\
m_1(\xi) & m_1(\xi+1/2)
\end{pmatrix}
\]
is unitary for all $\xi\in\tilde{S}$.\footnote{The unitarity of the matrix $M_{m_0,m_1}$ is equivalent to the identity $m_0(\xi)\overline{m_1(\xi)}+m_0(\xi+1/2)\overline{m_1(\xi+1/2)}=0$, which is often referred to as the second Smith--Barnwell Equation.}
\end{definition}
\begin{remark}
If we start with $m_0'\in LP(C)$ then $(m_0,m_1)\in FP(C)$ with $m_0'=m_0\big|_S$ if and only if the following two conditions hold:
\begin{enumerate}
\item There is some unimodular function $\mu_0$ on $(S+1/2)\backslash S$ for which
\[
m_0(\xi)=\mu_0(\xi)\sqrt{1-|m_0'(\xi+1/2)|^2}
\]
for each $\xi\in (S+1/2)\backslash S$.  This is dictated by the Smith--Barnwell condition $|m_0(\xi)|^2+|m_0(\xi+1/2)|^2=1$ for each $\xi\in \tilde{S}$ along with the fact that $m_0'=m_0\big|_S$ satisfies the condition on $S\cap(S+1/2)$.
\item There is some 1-periodic unimodular function $\mu_1$ on $\tilde{S}$ for which $m_1(\xi)=\mu_1(2\xi)e^{2\pi i\xi}\overline{m_0(\xi+1/2)}$ for each $\xi\in\tilde{S}$.  This arises by checking that, for each $m_1$ of this form, $M_{m_0,m_1}$ is unitary on $\tilde{S}$ and then $M_{m_0,\tilde{m}_1}$ is also unitary on $\tilde{S}$ if and only if $\tilde{m}_1=(D^{1,\infty}\mu)m_1$ for some $\mu\in\U$.
\end{enumerate}
\end{remark}
\begin{definition}
When $(m_0,m_1)\in FP(C)$ we have by definition that $m_0\big|_S\in LP(C)$; by the remark following the definition of $LP(C)$, we know there is a choice of $\phi$ so that $(\phi,m_0\big|_S)\in \s$.  For this choice, we define $\psi_{(\phi,m_0,m_1)}\in D\langle\phi\rangle$ by
\[
\psi_{(\phi,m_0,m_1)}=\sqrt{2}D^1(m_1\bullet\phi)
\]
or, equivalently,
\begin{equation}\label{waveletdefinition}
\widehat{\psi}_{(\phi,m_0,m_1)}(\xi)=\left\{
	\begin{array}{ll}
		m_1(\xi/2)\widehat{\phi}(\xi/2)  & \mbox{for } \xi\in 2C \\
		0 & \mbox{for } \xi \notin 2C
	\end{array}
\right..
\end{equation}
\end{definition}
\begin{remark}
Note that $|m_1(\xi/2)|=|m_0(\xi/2+1/2)|=\sqrt{1-|m_0(\xi/2)|^2}>0$ if and only if $|m_0(\xi/2)|<1$ which happens if and only if $|\widehat{\phi}(\xi)|<|\widehat{\phi}(\xi/2)|$.  This condition is trivially satisfied when $\xi\in 2C\backslash C$.  Hence $\textrm{supp }\widehat{\psi}_{(\phi,m_0,m_1)}=(2C\backslash C)\cup\{\xi\in C:|\widehat{\phi}(\xi)|<|\widehat{\phi}(\xi/2)|\}$.  Since $|\widehat{\phi}|=\widehat{\phi}_{|m_0\big|_S|}$, the support of $\widehat{\psi}_{(\phi,m_0,m_1)}$ depends only on $m_0'=m_0\big|_S$.
\end{remark}
\begin{proposition}
If $(m_0,m_1)\in FP(C)$, then let $\psi=\psi_{(\phi,m_0,m_1)}$, where this last quantity is as described in the previous definition.  Then $\psi\in\pmra$.
\end{proposition}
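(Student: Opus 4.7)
The plan is to establish $\psi \in \pmra_1$; by Theorem \ref{pmra1=pmra2} this will suffice to conclude $\psi \in \pmra$. The representation $\widehat{\psi}(2\xi) = e^{2\pi i\xi}\nu(2\xi)\overline{m_0(\xi+1/2)}\widehat{\phi}(\xi)$ demanded by the definition of $\pmra_1$ is essentially built into the construction: by (\ref{waveletdefinition}), $\widehat{\psi}(2\xi) = m_1(\xi)\widehat{\phi}(\xi)$ on $C$ and vanishes off $C$, while condition (2) of the remark following Definition \ref{fpcdef} expresses $m_1(\xi) = \mu_1(2\xi)e^{2\pi i\xi}\overline{m_0(\xi+1/2)}$ for some $\mu_1 \in \U$; taking $\nu = \mu_1$ gives the required identity everywhere, since off $C$ both sides are zero. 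Thus the whole task reduces to verifying $\psi \in \p$, which I would do by checking the two conditions of Theorem \ref{caldtq}.

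The key pointwise identity is
\[
|\widehat{\psi}(2\xi)|^2 = |m_1(\xi)|^2|\widehat{\phi}(\xi)|^2 = (1-|m_0(\xi)|^2)|\widehat{\phi}(\xi)|^2 = |\widehat{\phi}(\xi)|^2 - |\widehat{\phi}(2\xi)|^2,
\]
which combines (\ref{waveletdefinition}), the column unitarity of $M_{m_0,m_1}$, and the two-scale equation of ($\s2$). Substituting $2^{j-1}\xi$ for $\xi$ and summing over $j \in \Z$, the series telescopes, so
\[
\sum_{j\in\Z}|\widehat{\psi}(2^j\xi)|^2 = \lim_{M\to\infty}|\widehat{\phi}(2^{-M}\xi)|^2 - \lim_{N\to\infty}|\widehat{\phi}(2^N\xi)|^2,
\]
which equals $1 - 0 = 1$ a.e.\ by the dyadic continuity condition ($\s1$) and Lemma 2.8 of \cite{PSWXI}, respectively. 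This gives the Dyadic Calder\'on Condition.

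For the $t_q$ equation with $q$ odd, the key observation is that $2^{j-1}q \in \Z$ for $j \geq 1$, so 1-periodicity of $m_0$ and $m_1$ gives $m_i(2^{j-1}(\xi+q)) = m_i(2^{j-1}\xi)$, and the same algebra as in the Calder\'on step yields
\[
\widehat{\psi}(2^j\xi)\overline{\widehat{\psi}(2^j(\xi+q))} = \widehat{\phi}(2^{j-1}\xi)\overline{\widehat{\phi}(2^{j-1}(\xi+q))} - \widehat{\phi}(2^j\xi)\overline{\widehat{\phi}(2^j(\xi+q))}
\]
for all $j \geq 1$. The $j=0$ term is different: since $q$ is odd, $(\xi+q)/2 \equiv \xi/2 + 1/2 \pmod{1}$, and the off-diagonal unitarity relation $m_1(\zeta)\overline{m_1(\zeta+1/2)} = -m_0(\zeta)\overline{m_0(\zeta+1/2)}$ produces the sign flip $\widehat{\psi}(\xi)\overline{\widehat{\psi}(\xi+q)} = -\widehat{\phi}(\xi)\overline{\widehat{\phi}(\xi+q)}$. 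This negative term cancels the opening $+\widehat{\phi}(\xi)\overline{\widehat{\phi}(\xi+q)}$ of the telescoped sum over $j \geq 1$, leaving only $-\lim_{N\to\infty}\widehat{\phi}(2^N\xi)\overline{\widehat{\phi}(2^N(\xi+q))}$, which vanishes a.e.\ by Cauchy--Schwarz combined with Lemma 2.8 of \cite{PSWXI}.

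The main technical nuisance is bookkeeping: the two-scale equation, the unitarity of $M_{m_0,m_1}$, and the defining formula (\ref{waveletdefinition}) for $\widehat{\psi}$ are each valid on different sets ($S$, $\widetilde{S}$, and $2C$ respectively). The containment chain $C \subseteq S \subseteq \widetilde{S}$ together with $C/2 \subseteq C$ and the vanishing of $\widehat{\phi}$ off $C$ let every identity extend consistently to $\R$ once we interpret products involving undefined values of $m_0$ or $m_1$ as zero; once this is handled, the proof is essentially two short telescoping calculations linked by the sign flip coming from off-diagonal unitarity.
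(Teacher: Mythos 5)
Your proof is correct, and it takes a more self-contained route than the paper does. The paper's own proof is three lines: it extracts from the unitarity of $M_{m_0,m_1}$ only the modulus identity $\sum_{j\ge1}|\widehat{\psi}(2^j\xi)|^2=|\widehat{\phi}(\xi)|^2$ (equation (\ref{eqn15}), obtained by the same telescoping you use), then outsources the verification of the Calder\'on and $t_q$ equations entirely to \cite{PSWXI}, and finally concludes $\psi\in\pmra$ by identifying $V_0(\psi)=\overline{\sum_{j\ge1}W_{-j}(\psi)}=\langle\phi\rangle$, i.e.\ via the $\pmra_2$ characterization. You instead carry out the full membership check in $\p$ --- the Calder\'on condition by the two-sided telescoping sum, and the $t_q$ equation by splitting off the $j=0$ term and using the column-orthogonality relation $m_1(\zeta)\overline{m_1(\zeta+1/2)}=-m_0(\zeta)\overline{m_0(\zeta+1/2)}$ to produce the cancelling sign --- and you obtain the MRA property via the $\pmra_1$ filter form, which is indeed built into (\ref{waveletdefinition}) together with condition (2) of the remark after Definition \ref{fpcdef}. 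Your version makes visible something the paper's citation obscures: the $t_q$ verification genuinely needs the \emph{full} unitarity of $M_{m_0,m_1}$ (the off-diagonal relation), not just the single identity (\ref{eqn15}) that the paper records before invoking \cite{PSWXI}. What the paper's route buys in exchange is brevity and the explicit structural statement $V_0(\psi)=\langle\phi\rangle$, which is the form of the conclusion reused later in the scaling-function analysis. Your handling of the domain bookkeeping (the conventions on $C\subseteq S\subseteq\widetilde{S}$, $C/2\subseteq C$, and interpreting products against vanishing $\widehat{\phi}$ as zero) is the right way to make the pointwise identities hold a.e.\ on all of $\R$, and the two boundary limits are justified exactly as in Section \ref{section223} of the paper, by ($\s1$) and \cite[Lemma 2.8]{PSWXI} respectively.
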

\proof  The fact that $M_{m_0,m_1}$ is unitary establishes that
\begin{equation}\label{eqn15}
\sum_{j=1}^\infty|\widehat{\psi}(2^j\xi)|^2=|\widehat{\phi}(\xi)|^2,
\end{equation}
as we saw in Section \ref{section223}.  As shown in \cite{PSWXI}, this equality implies that $\psi$ satisfies the Calder\'{o}n and $t_q$ equations, so $\psi\in \p$,  Also, $V_0(\psi)=\overline{\sum_{j=1}^\infty W_{-j}(\psi)}=\langle\phi\rangle$, so $\psi\in\pmra$.\qed

\begin{proposition}
Suppose that $\psi\in\pmra$.  Then there exist choices of $(\phi,m_0,m_1)$ for which $\psi=\psi_{(\phi,m_0,m_1)}$ and we then say that each such $\phi$ is a scaling function for $\psi$ with $(m_0,m_1)\in FP(C_\phi)$ the low- and high-pass filters for $\psi$ relative to $\phi$.
\end{proposition}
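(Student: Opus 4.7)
The plan is to leverage the $\pmra_1=\pmra$ equivalence to extract a candidate $\phi$ together with a 1-periodic $m_0$ on $S_\phi$, and then to extend this data into a bona fide filter pair $(m_0,m_1)\in FP(C_\phi)$ whose associated wavelet $\psi_{(\phi,m_0,m_1)}$ coincides with $\psi$.

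Since $\psi\in\pmra_1$, there exist $(\phi,m_0)\in\s$ and a 1-periodic unimodular function $\nu$ with $\widehat{\psi}(2\xi)=e^{2\pi i\xi}\nu(2\xi)\overline{m_0(\xi+1/2)}\widehat{\phi}(\xi)$. For definiteness I would use the specific $\phi$ produced in the $\pmra_1\Rightarrow\pmra_2$ direction of the proof in Section~\ref{section223}: pick $\widetilde{\phi}$ with $V_0(\psi)=\langle\widetilde{\phi}\rangle$ and $p_{\widetilde{\phi}}=\chi_{S_{\widetilde{\phi}}}$, and set $\phi:=\sqrt{D_\psi}\bullet\widetilde{\phi}$, so that $p_\phi=D_\psi$ and $V_0(\psi)=\langle\phi\rangle$. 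By ($\s2$)--($\s3$), $m_0$ is 1-periodic on $S_\phi$, satisfies $|m_0|\le 1$, and obeys Smith--Barnwell on $S_\phi\cap(S_\phi+1/2)$.

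The next step is to extend $m_0$ from $S_\phi$ to $\widetilde{S}_\phi=S_\phi\cup(S_\phi+1/2)$ and to define a matching $m_1$ on all of $\widetilde{S}_\phi$. Following the recipe in the remark after Definition~\ref{fpcdef}, on $(S_\phi+1/2)\setminus S_\phi$ I would define $m_0(\xi):=\sqrt{1-|m_0(\xi+1/2)|^2}$, noting that $\xi+1/2\in S_\phi$ by 1-periodicity so the right-hand side makes sense; this extends Smith--Barnwell from $S_\phi\cap(S_\phi+1/2)$ to all of $\widetilde{S}_\phi$. Then on all of $\widetilde{S}_\phi$ I define $m_1(\xi):=e^{2\pi i\xi}\nu(2\xi)\overline{m_0(\xi+1/2)}$. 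Each column of $M_{m_0,m_1}$ has unit norm by construction, and a short computation using 1-periodicity of $\nu$ and $m_0$ together with $\overline{e^{2\pi i(\xi+1/2)}}=-e^{-2\pi i\xi}$ shows the columns are orthogonal, so $M_{m_0,m_1}$ is unitary on $\widetilde{S}_\phi$ and $(m_0,m_1)\in FP(C_\phi)$.

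Finally, to verify $\psi=\psi_{(\phi,m_0,m_1)}$ via formula~(\ref{waveletdefinition}): if $\xi\in 2C_\phi$, then $\xi/2\in C_\phi\subset S_\phi$, and the $\pmra_1$ identity gives $\widehat{\psi}(\xi)=m_1(\xi/2)\widehat{\phi}(\xi/2)$ as required; if $\xi\notin 2C_\phi$, then $\widehat{\phi}(\xi/2)=0$, and the identity $\sum_{j\ge 1}|\widehat{\psi}(2^j\eta)|^2=|\widehat{\phi}(\eta)|^2$ from Section~\ref{section223}, applied at $\eta=\xi/2$, forces $\widehat{\psi}(\xi)=0$, matching the formula. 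The main obstacle I expect is the extension step: one must verify that the naive Smith--Barnwell extension of $m_0$ onto $(S_\phi+1/2)\setminus S_\phi$ is consistent with the prescribed form of $m_1$ inherited from the $\pmra_1$ identity, so that $M_{m_0,m_1}$ is genuinely unitary on the newly defined portion of $\widetilde{S}_\phi$. This is a short but delicate algebraic bookkeeping check, requiring one to distinguish the portions of $\widetilde{S}_\phi$ where $m_0$ is inherited from the scaling function from those where it is defined by extension.
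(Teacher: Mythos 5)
Your proposal is correct and follows essentially the same route as the paper: the paper's proof of this proposition consists entirely of deferring to the discussion in Section \ref{section223}, and your argument is precisely an explicit unwinding of that discussion (extract $(\phi,m_0)$ from the $\pmra_1=\pmra_2$ equivalence, extend $m_0$ to $\widetilde{S}_\phi$ and define $m_1$ per the remark after Definition \ref{fpcdef}, then match against formula (\ref{waveletdefinition})). The ``delicate bookkeeping'' you flag at the end is exactly the extension step the paper itself performs in part 1 of Section \ref{section223}, so no new idea is needed.
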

\proof
This follows by the discussion in the previous subsection.
\qed

\begin{remark}
The above discussion raises a host of questions concerning the back and forth passage between $\s$ and $\pmra$.  If we fix $(\phi,m_0')$ and look at the functions $\mu_0$ and $\mu_1$ --- as in the remark following Definition \ref{fpcdef} --- which describe the pairs $(m_0,m_1)\in FP(C_\phi)$ for which $m_0\big|_{S_\phi}=m_0'$, how does $\psi_{(\phi,m_0,m_1)}$ depend on $\mu_0$ and $\mu_1$?  Alternatively, if we fix $m_0'\in LP(C)$ and look at various choices of $\phi$ for which $(\phi,m_0')\in\s$ along with associated pairs $(m_0,m_1)$, what can we say about the family of functions $\psi_{(\phi,m_0,m_1)}$  Finally, if we fix $\psi\in\pmra$ what can we say about the family of scaling functions for $\psi$ and the associated family of filter pairs $(m_0,m_1)$?  We address these questions in the following subsection.
\end{remark}
\subsection{Comparisons with the back and forth passage between $\s$ and $\pmra$}
\subsubsection{How $\psi_{(\phi,m_0,m_1)}$ depends on $\mu_0$ and $\mu_1$}
Throughout this section, we fix $(\phi,m_0')\in\s$ and let $C=C_\phi$, $S=S_\phi$, $\tilde{S}=S_\phi\cup(S_\phi+1/2)$, and $V_0=\langle\phi\rangle$.  We also fix $(m_0,m_1)\in FP(C)$ for which $m_0\big|_S=m_0'$ and let $\psi=\psi_{(\phi,m_0,m_1)}$ and $W_0=W_0(\psi)=\langle\psi\rangle$.  Then $\psi\in V_1\equiv D^1V_0$, $W_0\cap V_0=\{0\}$, $V_1=\overline{W_0+V_0}$, and $V_0=V_0(\psi)=\overline{\sum_{j=1}^\infty W_{-j}(\psi)}$ --- we emphasize that the sums in the last two equalities are not necessarily orthogonal sums!  When $(\tilde{\phi},\tilde{m}_0')\in\s$ is a scaling function for $\psi$ or some member of $\U\bullet\psi$ (\ref{eqn15}) tells us that $|\widehat{\tilde{\phi}}|=|\widehat{\phi}|$ and hence $|\tilde{m}_0'|=|m_0\big|_S|$.  We also have $\tilde{\phi}\in\langle\phi\rangle$, and it follows from the elementary properties of $\s$ that there is some $\mu\in\U$ for which 
\begin{equation}\label{eqn16}
(\tilde{\phi},\tilde{m}_0')=\mu\cdot(\phi,m_0')=(\mu\bullet\phi,\delta_\mu m_0').
\end{equation}  This motivates us to look only at those members of $\s$ defined by (\ref{eqn16}) for some $\mu\in\U$ and go on to study those members of $\pmra$ having the form $\tilde{\psi}=\psi_{(\tilde{\phi},\tilde{m}_0,\tilde{m}_1)}$ where $(\tilde{m}_0,\tilde{m}_1)\in FP(C)$ and $\tilde{m}_0\big|_S=\tilde{m}_0'=\delta_\mu m_0'$.
\begin{theorem}
In the above setting, we have the following:
\begin{enumerate}
\item \[\tilde{m}_0=\sigma m_0\textrm{ for }\sigma\in\U\textrm{ with }\sigma\big|_S=\delta_\mu.\]
\item \[\tilde{m}_1=(D^{1,\infty}\nu)\sigma m_1\textrm{ for some }\nu\in\U.\]
\item \begin{equation}\label{eqn17}\psi_{(\tilde{\phi},\tilde{m}_0,\tilde{m}_1)}=(\nu D^{-1,\infty}(\mu\sigma))\bullet\psi.\end{equation}
\end{enumerate}
Conversely, for each choice of $\mu,\nu\in\U$ and each $\sigma\in\U$ with $\sigma\big|_S=\delta_\mu$, then (\ref{eqn17}) holds for \[(\tilde{\phi},\tilde{m}_0,\tilde{m}_1)=(\mu\bullet\phi,\sigma m_0, (D^{1,\infty}\nu)\sigma m_1).\]
\end{theorem}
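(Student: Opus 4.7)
The plan is to establish (1), (2), (3) in sequence via the characterization from the remark following Definition \ref{fpcdef} together with the $\M$-action property of $\s$; the converse then follows from the same calculation read in reverse. All three parts reduce to bookkeeping of unimodular cocycle factors.

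\medskip

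For part (1), I would first show $|\tilde{m}_0|=|m_0|$ on $\tilde{S}$. On $S$ this is immediate from $\tilde{m}_0|_S=\delta_\mu m_0'$ and $|\delta_\mu|\equiv 1$. For $\xi\in(S+1/2)\setminus S$ one has $\xi+1/2\in S$, so $|\tilde{m}_0(\xi+1/2)|=|m_0(\xi+1/2)|$; the identity $MM^*=I$ applied to both $M_{m_0,m_1}$ and $M_{\tilde{m}_0,\tilde{m}_1}$ then yields $|\tilde{m}_0(\xi)|^2=1-|m_0(\xi+1/2)|^2=|m_0(\xi)|^2$. With equality of moduli in hand, I define $\sigma:=\delta_\mu$ on $S$ and $\sigma:=\tilde{m}_0/m_0$ on $(S+1/2)\setminus S$ where $m_0\neq 0$ (and $\sigma:=1$ on the possibly null set where $m_0=\tilde{m}_0=0$); extend to $\R\setminus\tilde{S}$ by $\sigma\equiv 1$. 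The resulting function is in $\U$, satisfies $\sigma|_S=\delta_\mu$, and gives $\tilde{m}_0=\sigma m_0$ on $\tilde{S}$.

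\medskip

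For part (2), I invoke the representations $m_1(\xi)=\mu_1(2\xi)e^{2\pi i\xi}\overline{m_0(\xi+1/2)}$ and $\tilde{m}_1(\xi)=\tilde{\mu}_1(2\xi)e^{2\pi i\xi}\overline{\tilde{m}_0(\xi+1/2)}$ for some $\mu_1,\tilde{\mu}_1\in\U$ guaranteed by the remark after Definition \ref{fpcdef}. Substituting $\tilde{m}_0(\xi+1/2)=\sigma(\xi+1/2)m_0(\xi+1/2)$ gives
\[
\tilde{m}_1(\xi)=\tilde{\mu}_1(2\xi)\overline{\mu_1(2\xi)}\,\overline{\sigma(\xi+1/2)}\,m_1(\xi).
\]
To reach the target $\tilde{m}_1=(D^{1,\infty}\nu)\sigma m_1$, I need $\nu(2\xi)=\tilde{\mu}_1(2\xi)\overline{\mu_1(2\xi)}\,\overline{\sigma(\xi)\sigma(\xi+1/2)}$. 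The crucial observation is that $\xi\mapsto\sigma(\xi)\sigma(\xi+1/2)$ is $1/2$-periodic (by $1$-periodicity of $\sigma$), hence factors as $h(2\xi)$ for a unique $h\in\U$; then $\nu:=\tilde{\mu}_1\overline{\mu_1 h}\in\U$ does the job. Part (3) is a direct substitution into (\ref{waveletdefinition}): on $2C$,
\[
\widehat{\psi}_{(\tilde{\phi},\tilde{m}_0,\tilde{m}_1)}(\xi)=\tilde{m}_1(\xi/2)\widehat{\tilde{\phi}}(\xi/2)=\nu(\xi)\sigma(\xi/2)\mu(\xi/2)m_1(\xi/2)\widehat{\phi}(\xi/2)=\nu(\xi)\,(D^{-1,\infty}(\mu\sigma))(\xi)\,\widehat{\psi}(\xi),
\]
which is exactly (\ref{eqn17}).

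\medskip

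For the converse, given $\mu,\nu\in\U$ and $\sigma\in\U$ with $\sigma|_S=\delta_\mu$, the pair $(\mu\bullet\phi,\sigma m_0|_S)=(\mu\bullet\phi,\delta_\mu m_0')$ lies in $\s$ by the $\M$-action property, while the factorization
\[
M_{\tilde{m}_0,\tilde{m}_1}(\xi)=\mathrm{diag}(1,\nu(2\xi))\cdot M_{m_0,m_1}(\xi)\cdot\mathrm{diag}(\sigma(\xi),\sigma(\xi+1/2))
\]
exhibits $M_{\tilde{m}_0,\tilde{m}_1}$ as a product of a unitary matrix and two unimodular diagonal matrices, hence is itself unitary; therefore $(\tilde{m}_0,\tilde{m}_1)\in FP(C)$, and (\ref{eqn17}) follows from the same substitution as in part (3). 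The main obstacle I anticipate is the measure-theoretic bookkeeping in part (1): $\sigma$ must be exhibited as a globally measurable, $1$-periodic, unimodular function matching two prescribed patterns on the disjoint pieces $S$ and $(S+1/2)\setminus S$ of $\tilde{S}$, and this rests on the fact that $m_0$ and $\tilde{m}_0$ share the same zero set within $\tilde{S}$. Once that is in place, parts (2), (3), and the converse are formal manipulations.
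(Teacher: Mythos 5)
Your proposal is correct and follows essentially the same route as the paper: moduli matching via Smith--Barnwell on $(S+1/2)\setminus S$ for part (1), the characterization of high-pass filters from the remark after Definition \ref{fpcdef} for part (2), and direct substitution into (\ref{waveletdefinition}) for part (3) and the converse. Your explicit cocycle computation in part (2) (using the $1/2$-periodicity of $\sigma(\xi)\sigma(\xi+1/2)$) and the diagonal-unitary factorization of $M_{\tilde{m}_0,\tilde{m}_1}$ in the converse are just more explicit versions of the steps the paper handles by citation, and both are sound.
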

\proof
For $\mu\in\s$ and $(\tilde{\phi},m_0')=\mu\cdot(\phi,m_0')$, a 1-periodic function $\tilde{m}_0$ on $\tilde{S}$ satisfies the Smith--Barnwell equation on $\tilde{S}$ and the extension condition $\tilde{m}_0\big|_S=\tilde{m}_0'=\delta_\mu m_0'$ if and only if, for each $\xi\in (S+1/2)\backslash S=\tilde{S}\backslash S$, one has $|\tilde{m}_0(\xi)|=\sqrt{1-|m_0'(\xi+1/2)|^2}=|m_0(\xi)|$.  But then $|\tilde{m}_0|=|m_0|$ so $m_0'=\sigma m_0$ for $\sigma\in\U$ with $\sigma\big|_S=\delta_\mu$.  Next, if $(\tilde{m}_0,\tilde{m}_1)\in FP(C)$, each of $M_{(m_0,m_1)}(\xi),M_{(\sigma m_0,\sigma m_1)}(\xi)=M_{(m_0',\sigma m_1)}(\xi),$ and $M_{(\tilde{m}_0,\tilde{m}_1)}(\xi)$ are unitary for almost every $\xi\in\tilde{S}$ and, from the remark following Definition \ref{fpcdef}, this condition holds if and only if $\tilde{m}_1=(D^{1,\infty}\nu)\sigma m_1$ for some $\nu\in\U$.  WIth $(\tilde{\phi},\tilde{m}_0,\tilde{m}_1)=(\mu\bullet\phi,\sigma m_0,(D^{1,\infty}\nu)\sigma m_1)$ the fact that $\psi=\sqrt{2}D^1(m_1\bullet\phi)$ implies
\begin{align*}
\tilde{\psi}=\psi_{(\tilde{\phi},\tilde{m}_0, \tilde{m_1})}&=\sqrt{2}D^1(\tilde{m}_1\bullet\tilde{\phi})\\
&=\sqrt{2}D\left[((D^{1,\infty}\nu)\sigma m_1)\bullet(\mu\bullet\phi)\right]\\
&=\sqrt{2}D\left[((D^{1,\infty}\nu)\sigma\mu)\bullet(m_1\bullet\phi)\right]\\
&=(\nu D^{-1,\infty}(\sigma\mu))\bullet\psi,
\end{align*}
so (\ref{eqn17}) holds.
\qed
\begin{remark}
Suppose that $\tilde{\psi}$ is given by (\ref{eqn17}) for some $\mu,\nu\in\U$ and $\sigma\in\U$ with $\sigma\big|_S=\delta_\mu$.  Then $D^{-1,\infty}(\sigma\mu)$ belongs to the group $\U_{-1}=D^{-1,\infty}\U$ consisting of unimodular functions on $\R$ which are $2\Z$-periodic and $\U=\U_0$ is a subgroup of $\U_{-1}$.  Since $2\Z$ is the lattice dual of $\frac{1}{2}\Z$, the group $\U_{-1}$ plays the same role for the principal $\frac{1}{2}\Z$-invariant space $V_1=D^1V_0$ as $\U$ for the principal $\Z$-invariant space $V_0$.  Thus two members of $V_1$ have the same modulus if and only if they lie in the same $\U_{-1}$ orbit while to members of $V_0$ have the same modulus if and only if they lie in the same $\U$ orbit.  In particular, $\tilde{\psi}\in\langle\psi\rangle$ if and only if there is an element $\tilde{\nu}\in\U$ for which $\tilde{\nu}=D^{-1,\infty}(\sigma\mu)$ on the support of $\widehat{\psi}$.  Then, because $\mu$ is only determined by $\widehat{\tilde{\phi}}$ on $C$ and $\sigma$ is only determined by $\mu$ and $\tilde{m}_0$ on $S$, there is no loss of generality in assuming that $\sigma\mu=D^{1,\infty}\tilde{\nu}\in\U_1$.  This reduce (\ref{eqn17}) to
\begin{equation}
\tilde{\psi}=(\nu\tilde{\nu})\bullet\psi.
\end{equation}
\end{remark}
\begin{theorem}
Using the above notation,
\begin{enumerate}
\item $\tilde{\phi}\in L^2(\R)$ is a scaling function for a member of $\U\bullet\psi$ if and only if $\tilde{\phi}\in\U\bullet\phi$.
\item We have $\psi_{(\tilde{\phi},\tilde{m}_0,\tilde{m}_1)}=\psi=\psi_{(\phi,m_0,m_1)}$ if and only if we have $\mu,\nu\tilde{\nu}\in\U$ for which $\tilde{\phi}=\mu\bullet\phi$ and with $\sigma=\frac{D^{1,\infty}\tilde{\nu}}{\mu}$, $\tilde{m}_0=\sigma m_0$, and $\tilde{m}_1=(D^{1,\infty}\nu)\sigma m_1=\frac{D^{1,\infty}(\nu\tilde{\nu})}{\mu}m_1$.
\end{enumerate}
\end{theorem}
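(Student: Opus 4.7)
The plan is to bootstrap from the previous theorem (which parametrizes all admissible triples producing wavelets in $\U\bullet\psi$ via parameters $\mu,\sigma,\nu$) together with the gauge-reduction in the subsequent remark (the freedom to choose $\sigma$ on $\tilde{S}\setminus S$ so that $\sigma\mu=D^{1,\infty}\tilde{\nu}\in\U_1$ for some $\tilde{\nu}\in\U$, which collapses (\ref{eqn17}) to $\tilde{\psi}=(\nu\tilde{\nu})\bullet\psi$). Once these reductions are in hand, both parts should drop out with a minimum of additional work.

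For the forward direction of Part 1, suppose $\tilde{\phi}$ is a scaling function for some $\tilde{\psi}=\alpha\bullet\psi$ with $\alpha\in\U$. First I would verify that $V_0(\tilde{\psi})=V_0(\psi)$: since $D^{-j}(\alpha\bullet\psi)=(D^{j,\infty}\alpha)\bullet D^{-j}\psi$ and $D^{j,\infty}\alpha$ remains $1$-periodic for $j\ge 0$, multiplication by $D^{j,\infty}\alpha$ sends $\langle D^{-j}\psi\rangle$ to itself, so the closed spans match. Hence $\langle\tilde{\phi}\rangle=\langle\phi\rangle$. Applying (\ref{eqn15}) to both $(\tilde{\phi},\tilde{\psi})$ and $(\phi,\psi)$ and using $|\widehat{\tilde{\psi}}|=|\widehat{\psi}|$ gives $|\widehat{\tilde{\phi}}|=|\widehat{\phi}|$. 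Since $\tilde{\phi}\in\langle\phi\rangle$, the Fourier-side characterization of principal shift-invariant spaces lets me write $\widehat{\tilde{\phi}}=r\widehat{\phi}$ for some $1$-periodic $r$; the equality $|\widehat{\tilde{\phi}}|=|\widehat{\phi}|$ forces $|r|=1$ on $S_\phi$, so extending $r$ unimodularly off $S_\phi$ produces $\mu\in\U$ with $\tilde{\phi}=\mu\bullet\phi$. For the converse, given $\tilde{\phi}=\mu\bullet\phi$, I would invoke the converse half of the previous theorem to construct $(\tilde{m}_0,\tilde{m}_1)\in FP(C)$; using the remark's construction I can then choose $\sigma\in\U$ with $\sigma|_S=\delta_\mu$ and $\sigma\mu\in\U_1$, whereupon (\ref{eqn17}) and the reduction yield $\tilde{\psi}=(\nu\tilde{\nu})\bullet\psi\in\U\bullet\psi$.

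For Part 2 I would apply the reduction $\tilde{\psi}=(\nu\tilde{\nu})\bullet\psi$ directly. The equality $\tilde{\psi}=\psi$ is then equivalent to $\nu\tilde{\nu}=1$ almost everywhere on $\textrm{supp}\,\widehat{\psi}$, and the identifications $\tilde{\phi}=\mu\bullet\phi$ (from Part 1), $\sigma=D^{1,\infty}\tilde{\nu}/\mu$, $\tilde{m}_0=\sigma m_0$, and $\tilde{m}_1=(D^{1,\infty}\nu)\sigma m_1=\frac{D^{1,\infty}(\nu\tilde{\nu})}{\mu}m_1$ follow from combining the parametrization in the previous theorem with the relation $\sigma\mu=D^{1,\infty}\tilde{\nu}$. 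The converse direction is a direct substitution into (\ref{eqn17}): with $\sigma\mu=D^{1,\infty}\tilde{\nu}$ one has $D^{-1,\infty}(\sigma\mu)=\tilde{\nu}$ and so $\tilde{\psi}=(\nu\tilde{\nu})\bullet\psi$, which collapses to $\psi$ under the constraint.

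The main obstacle I anticipate is the step in Part 1 showing $V_0(\alpha\bullet\psi)=V_0(\psi)$ and then using $\tilde{\phi}\in\langle\phi\rangle$ to force $\tilde{\phi}\in\U\bullet\phi$; both rely on the tight interaction between unimodular $1$-periodic multipliers and dilation, and on having the correct Fourier-side description of principal shift-invariant spaces. A secondary technical nuisance is that $\sigma$ is only pinned down on $S$ by $\mu$, so the identity $\sigma\mu=D^{1,\infty}\tilde{\nu}$ must be arranged on $\tilde{S}$ by exploiting the remaining freedom on $\tilde{S}\setminus S$; the existence of such a $\tilde{\nu}\in\U$ is exactly what Lemma \ref{lemma1.4} and the preceding remark guarantee. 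Once those pieces are assembled, the remaining manipulations are routine algebra with the operators $D^{j,\infty}$ and $\delta$.
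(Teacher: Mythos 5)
Your proposal is correct and takes essentially the same route as the paper: the paper's entire proof is the one-line observation that both claims follow from (\ref{eqn17}) together with the remark preceding the theorem (the reduction $\sigma\mu=D^{1,\infty}\tilde{\nu}$ giving $\tilde{\psi}=(\nu\tilde{\nu})\bullet\psi$), and your argument simply fills in the details of that deduction, including the identification $\tilde{\phi}=\mu\bullet\phi$ that the paper had already established in the discussion surrounding (\ref{eqn16}).
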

\proof
Both of these claims are immediate from (\ref{eqn17}) and the remark preceding the statement of the theorem.

\qed

\begin{remark}
Following up on the remarks preceding the above theorem, when $\tilde{\psi}$ is given by (\ref{eqn17}), the space $W_0(\tilde{\psi})=\langle\tilde{\psi}\rangle\subset V_1$ depends only on the $\U$ coset of $D^{-1,\infty}(\sigma\mu)$ in $\U_{-1}$ or, equivalently the $\U_1$ coset of $\sigma\mu$ in $\U$.  But for the reasons mentioned in the remark following the proof of Lemma \ref{lemma1.4} it is likely impossible to reasonably construct a set of coset representations for $\U/\U_1$ or, equivalently, for $\U_{-1}/\U$.  Also, in order to be more explicit about the choices for $\sigma$, we would need an explicit set of coset representations for $\U/\delta(\U)$, but this, too, is likely impossible to construct.  Despite these difficulties, it's highly likely that further exploration of the objects leading to (\ref{eqn17}), aided by a variety of illuminating examples, would lead to some form of ``tightening'' of the preceding two theorems even though ``complete understanding'' of the sets $LP(C)$ and $FP(C)$ and the way in which they expedite the passage between $\s$ and $\pmra$ seems unrealistic.
\end{remark}
\section{Projections of Scaling Functions}
\subsection{Maximal Principal Shift-invariant Spaces and Projections}
We begin by recalling a couple of definitions:
\begin{definition}\label{maximal}
We say that a principal shift-invariant space $\langle \phi\rangle$ is maximal precisely when it is not a proper subset of any other principal shift-invariant space.  In other words $\langle\phi\rangle\subset \langle\psi\rangle$ implies $\langle\phi\rangle= \langle\psi\rangle$.  We will abuse notation somewhat and refer to a function $\phi$ being maximal (respectively, non-maximal) if the principal shift-invariant space generated by $\phi$ is maximal (respectively, non-maximal).
\end{definition}
\begin{definition}
We will denote by $\s^*$ the subset of $\s$ whose elements are maximal.
\end{definition}
We recall the definition of the bracket: for $\phi,\psi\in L^2(\R)$, the bracket of $\phi$ and $\psi$ is the 1-periodic function in $L^1(\T)$ defined by
\[
[\phi,\psi](\xi):=\sum_{k\in\Z}\widehat{\phi}(\xi+k)\overline{\widehat{\psi}(\xi+k)}\textrm{ a.e.}
\]
The special case where $\phi=\psi$ is especially useful, and we remind the reader of the notation $p_\phi:=[\phi,\phi]$.
\begin{theorem}\label{maximalpositive}
The principal shift-invariant space generated by a function $\phi\in L^2(\R)$ is maximal if and only if $p_\phi(\xi)>0$ for almost every $\xi$.
\end{theorem}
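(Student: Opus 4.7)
The plan is to prove both implications using the standard characterization of the principal shift-invariant space, namely that $\langle\phi\rangle$ consists exactly of those $f \in L^2(\R)$ whose Fourier transform can be written as $\widehat{f} = m\,\widehat{\phi}$ for some measurable 1-periodic function $m$ with $m\widehat{\phi} \in L^2(\R)$. This is the key structural fact from the theory of principal shift-invariant spaces (found, for example, in the work of de Boor--DeVore--Ron or Bownik cited earlier in the paper) and will do most of the heavy lifting.

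For the forward direction, I would assume $p_\phi > 0$ almost everywhere and show that any $\psi$ with $\langle\phi\rangle \subseteq \langle\psi\rangle$ satisfies $\langle\psi\rangle \subseteq \langle\phi\rangle$. Since $\phi \in \langle\psi\rangle$, the characterization produces a 1-periodic measurable $m$ with $\widehat{\phi} = m\,\widehat{\psi}$. Summing $|\widehat{\phi}(\xi+k)|^2$ over $k \in \Z$ and using 1-periodicity of $m$ gives $p_\phi(\xi) = |m(\xi)|^2 p_\psi(\xi)$ almost everywhere. The hypothesis $p_\phi > 0$ a.e. then forces $m \neq 0$ a.e., so $1/m$ is a well-defined 1-periodic measurable function and $\widehat{\psi} = (1/m)\widehat{\phi}$; since $\widehat{\psi} \in L^2(\R)$ by assumption, the characterization gives $\psi \in \langle\phi\rangle$, yielding equality of the spaces.

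For the reverse direction, I would prove the contrapositive. Assume there is a measurable set $E \subseteq \T$ of positive (finite) measure on which $p_\phi$ vanishes; equivalently, $\widehat{\phi}$ vanishes on $E + \Z$, and in particular $S_\phi$ is disjoint (mod null) from $E + \Z$. Choose $F \subseteq E + \Z$ with $0 < |F| < \infty$ and define $\psi$ via $\widehat{\psi} := \widehat{\phi} + \chi_F$, which is in $L^2(\R)$. Taking $m$ to be the 1-periodic function equal to $1$ on $S_\phi$ and $0$ on $\T \setminus S_\phi$, we get $m\,\widehat{\psi} = m\,\widehat{\phi} + m\chi_F = \widehat{\phi}$, since $F \subseteq \R \setminus S_\phi$. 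Hence $\phi \in \langle\psi\rangle$ and $\langle\phi\rangle \subseteq \langle\psi\rangle$. On the other hand, if $\psi$ were in $\langle\phi\rangle$, there would be a 1-periodic $m'$ with $\widehat{\psi} = m'\widehat{\phi}$, forcing $1 = m'(\xi) \cdot 0$ on the positive-measure set $F$, a contradiction. Thus the inclusion is strict, and $\langle\phi\rangle$ is not maximal.

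The main (mild) obstacle is the careful invocation of the characterization of principal shift-invariant spaces, and ensuring that the perturbation in the reverse direction stays within $L^2(\R)$ — the finiteness of $|F|$ takes care of this. Beyond that, the argument is essentially a two-line computation in each direction once the characterization is available, and no delicate Fourier-analytic estimates are required.
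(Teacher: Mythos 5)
Your proposal is correct and follows essentially the same route as the paper's own proof: the sufficiency direction inverts the $1$-periodic multiplier $m$ in $\widehat{\phi}=m\widehat{\psi}$ after deducing $p_\phi=|m|^2p_\psi$, and the necessity direction perturbs $\widehat{\phi}$ by a characteristic function supported off $S_\phi$ to produce a strictly larger principal shift-invariant space. The only cosmetic differences are that the paper phrases membership via $L^2(\T,p_\phi)$ rather than the condition $m\widehat{\phi}\in L^2(\R)$ (these are equivalent), and it certifies strictness by exhibiting $\chi_{S_\phi^c}\bullet\psi\neq 0$ rather than showing $\psi\notin\langle\phi\rangle$ directly.
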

\begin{remark}
This is essentially a folklore result, having been observed independently by numerous researchers.  We provide a short proof for the convenience of the reader.
\end{remark}
\proof
($\Rightarrow$) We prove this by contraposition.  As before, let $S_\phi$ denote the support of $p_\phi$ as a subset of $\R$.  Suppose $S_\phi^c\cap [0,1)$ is a set of positive measure.  Then define a function $\psi$ by $\widehat{\psi}(\xi)=\widehat{\phi}(\xi)$ on $S_\phi$ and $\widehat{\psi}(\xi)=1$ if $\xi\in S_\phi^c\cap[0,1)$.  Then $\psi$ is in $L^2(\R)$ with $p_\psi(\xi)>0$ almost everywhere.  It is easy to verify that $\langle\phi\rangle\subset\langle\psi\rangle$ but if we let $m(\xi)$ denote the 1-periodic function $\chi_{S_\phi^c}\in L^\infty(\T)$, then $0\neq m\bullet\psi\in\langle\psi\rangle$ but $m\bullet\phi\equiv 0$, which gives non-maximality.

($\Leftarrow$) Suppose that $\phi\in L^2(\R)$ is such that $p_\phi(\xi)>0$ almost everywhere and $\langle\phi\rangle\subset\langle\psi\rangle$.  Then we have $\phi\in\langle\psi\rangle$ and so there is some $m$ in the weighted space $L^2(\T,p_\phi)$ so that $\phi=m\bullet\psi$.  Hence $p_\phi=|m^2|p_\psi$.  Thus $|m|^2>0$ almost everywhere and $p_\psi>0$ almost everywhere.  The function $\frac{1}{m}$ is then finite almost everywhere and is still 1-periodic.  We can easily check that $\int \frac{1}{|m|^2}p_\phi=\int p_\psi<\infty$ so that $\frac{1}{m}\in L^2(\T,p_\phi)$.  Thus $\frac{1}{m}\bullet\phi\in \langle\phi\rangle.$  But we have that $\psi=\frac{1}{m}\bullet\phi$, and so $\psi\in\langle\phi\rangle$.  This means that $\langle\phi\rangle=\langle\psi\rangle$, which gives maximality.

\qed
\begin{remark}
It is worth mentioning here that by work of Saliani, \cite{saltrans}, and Paluszy\'{n}ski, \cite{paltrans}, one has that the condition $p_\phi>0$ is equivalent to $\mathcal{B}_\phi:=\{T_k\phi:k\in\Z\}$ being a $\ell^2(\Z)$ independent family.  This $\ell^2(\Z)$ independence is a sort of ``Hilbert space basis'' generalization of linear independence: in particular, it means that if $(a_k)\in\ell^2(\Z)$ and $\lim_{K\rightarrow\infty}\sum_{|k|\le K}a_k T_k\phi=0$ then $(a_k)$ is the zero sequence.  The linear independence of $\mathcal{B}_\phi$ follows by an elegant but very simple argument involving the Fourier transform and the fact that trigonometric polynomials have finitely many zeros.  By comparison, the theorem proven in Saliani's paper uses deep machinery related to the Carleson Theorem (Luzin's Conjecture) about almost everywhere convergence of Fourier series of $L^2(\T)$ functions; in particular, she uses careful estimates related to work by Kisliakov, \cite{kisliakov}, and Vinogradov, \cite{vinogradov}.
\end{remark}

Returning to the previous discussion, let $\phi$ be an $L^2(\R)$ function and $S_\phi$ the support of $p_\phi$.  If $E$ is any measurable subset of $S_\phi$ whose measure is in $(0,1)$, the map $P_E$ which carries $\phi\mapsto\chi_{E}\bullet\phi$ is an orthogonal projection from $\langle\phi\rangle$ onto $\langle\chi_E\bullet\phi\rangle$.  Since $E$ has positive measure, even if $\phi$ is maximal, the function $\chi_E\bullet\phi$ must necessarily be non-maximal.  One also has that $P_E T_k=T_k P_E$ for every integer $k$.  And obviously, $S_{\chi_E\bullet\phi}=E$ and $C_{\chi_E\bullet\phi}=E\cap C_\phi$.

Conversely, suppose that $0\neq\phi\in L^2(\R)$ is non-maximal with $E=S_\phi$.  Then $\phi=P_E\phi^*$, with $\phi^*\in L^2(\R)$ maximal if and only if $\widehat{\phi^*}\big|_E=\widehat{\phi}\big|_E$ and $p_{\phi^*}>0$ on $\T\backslash E$.  For every choice of such $\phi^*$, one has $C_{\phi^*}=C_\phi\cup C^*$, with $C^*$ a set for which $C^*+\Z=\R\backslash E$.

\subsection{Conditions on $E$ for which $P_E(\s)\subset \s$}
When $\phi^*$ is maximal and there is a 1-periodic function $m_0^*$ on $\T$ for which $(\phi^*,m_0^*)\in\s$ and $\phi=P_E\phi^*$ for some $E\subset \T$ with $0<|E|<1$, we have, for each $\xi\in E$
\begin{equation}
\widehat{\phi}(2\xi)=\chi_E(2\xi)\widehat{\phi^*}(2\xi)=\chi_{\frac{1}{2}E}(\xi)m_0^*(\xi)\widehat{\phi}(\xi)\label{eqn19}.
\end{equation}
When $2\xi\in C_\phi=E\cap C_{\phi^*}$, it follows that $\xi\in C_\phi$ with $m_0^*(\xi)\neq 0$.  Then $\frac{1}{2}C_\phi\subset C_\phi$ and $\frac{1}{2}E\subset \tilde{E}=E\cup (E+\frac{1}{2})$.  Putting $C=C_\phi$ and defining $m_0$ on $E$ by $m_0=m_0^*$ on $C/2+\Z$, $m_0=0$ on $C\backslash(C/2)+\Z$, we have that $\frac{1}{\sqrt{2}}D^{-1}\phi=m_0\bullet\phi$ (i.e. $\phi$ is reductive) with $|m_0|>0$ on $C/2+\Z$.  In the following section, we give necessary and sufficient conditions on $E$, $C=E\cap C_{\phi^*}$ and $m_0^*$ for which $(\phi,m_0)\in\s$.
\begin{theorem}
As above, we let $E$ be a measurable subset of $\T$ for which $0<|E|<1$.  Suppose that $\phi^*$ maximal with $(\phi^*,m_0^*)\in\s$.  Let $\phi=P_E\phi^*=\chi_E\bullet\phi^*$.  We use the notation that $C=C_\phi=C_{\phi^*}\cap E$, the support of $\widehat{\phi}$.  We define $m_0$ to be $m_0^*$ on $C/2+\Z$ and $m_0=0$ on $C\backslash(C/2)+\Z$.  Then $(\phi,m_0)\in\s$ if and only if
\begin{enumerate}
\item $\displaystyle{\R=\frac{C}{2}\cup\left(\bigcup_{n=1}^\infty 2^n(C\backslash C/2)\right)}$
\item $|m_0^*|=1$ on $(C/2+\Z)\cap\left((C\backslash C/2)+\frac{1}{2}+\Z\right)$
\item $\displaystyle{\left(C\backslash C/2+\Z\right)\cap\left((C\backslash C/2)+\frac{1}{2}+\Z\right)=\emptyset}$.
\end{enumerate}
\end{theorem}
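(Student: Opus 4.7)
The plan is to verify each of the defining properties (S1), (S2), (S3) of $\s$ for the pair $(\phi, m_0)$ by matching them against the structural hypotheses (1), (2), (3), using as central leverage the fact that $\phi^*$ is maximal so $S_{\phi^*} = \T$ and $m_0^*$ already satisfies the Smith--Barnwell equation on all of $\T$. Intuitively, conditions (1)--(3) describe exactly how the masking $\widehat{\phi} = \chi_E \widehat{\phi^*}$ can be carried out without destroying the scaling-function structure: (1) preserves dyadic continuity and reductivity, while (2) and (3) specify what must happen to Smith--Barnwell after the chop.

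For the forward direction, assume $(\phi, m_0) \in \s$. Reductivity (S2) immediately gives $C/2 \subseteq C$, whence the telescoping identity $2^n C = C/2 \cup \bigcup_{j=0}^{n} 2^j(C \setminus C/2)$ follows from $2(C/2) = C$. Combined with (S1), which places $2^{-n}\xi$ in $C$ eventually for almost every $\xi$, one concludes $\bigcup_{n\ge 0} 2^n C = \R$, recovering (1). For (2) and (3), invoke the Smith--Barnwell equation $|m_0(\xi)|^2 + |m_0(\xi + 1/2)|^2 = 1$ from (S3) on $S_\phi \cap (S_\phi + 1/2)$: if $\xi$ and $\xi + 1/2$ both lie in $(C \setminus C/2) + \Z$, the defining rule $m_0 = 0$ there gives $0 = 1$, ruling out this case and yielding (3); if $\xi \in (C/2 + \Z) \cap ((C \setminus C/2) + 1/2 + \Z)$, then $m_0(\xi + 1/2) = 0$ forces $|m_0^*(\xi)| = |m_0(\xi)| = 1$, yielding (2).

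For the reverse direction, assume (1), (2), (3). Condition (1) packages both the dyadic cover $\bigcup_{n \ge 0} 2^n C = \R$ and the reductive inclusion $C/2 \subseteq C$; together with the dyadic continuity of $|\widehat{\phi^*}|$ this gives (S1) for $\phi$, since for almost every $\xi$ the dyadic tail $2^{-n}\xi$ eventually sits in $C \subseteq E$ and thus $|\widehat{\phi}(2^{-n}\xi)| = |\widehat{\phi^*}(2^{-n}\xi)| \to 1$. The two-scale equation (S2) is then a direct computation: on $C/2$, $\widehat{\phi}(2\xi) = \widehat{\phi^*}(2\xi) = m_0^*(\xi)\widehat{\phi^*}(\xi) = m_0(\xi)\widehat{\phi}(\xi)$ via $\widehat{\phi} = \chi_E\widehat{\phi^*}$; on $C \setminus C/2$ both sides vanish since $2\xi \notin C$ there. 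Finally, (S3) follows from $|m_0^*| \le 1$ combined with a three-case split on $S_\phi \cap (S_\phi + 1/2)$: the both-in-$(C/2 + \Z)$ case inherits Smith--Barnwell from $m_0^*$'s identity on $\T$ (available by maximality of $\phi^*$); the mixed case is handled by (2), which supplies the modulus-one value to balance the vanishing partner; and the both-in-$(C \setminus C/2) + \Z$ case is excluded outright by (3).

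The main obstacle is the careful interplay between (1) and the reductive property: one must extract $C/2 \subseteq C$ from the decomposition in (1) and establish disjointness of the pieces $C/2, C \setminus C/2, 2(C \setminus C/2), \ldots$ in $\R$ to license the telescoping calculation. Equally delicate is the consistency of the definition of $m_0$ on any overlap between $(C/2 + \Z)$ and $((C \setminus C/2) + \Z)$, which reduces to the observation that $|m_0^*| > 0$ on $C_{\phi^*}/2 + \Z \supseteq C/2 + \Z$, so any such overlap combined with the Smith--Barnwell data in (2) and (3) must have measure zero.
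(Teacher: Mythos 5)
Your overall strategy coincides with the paper's: the equivalence is proved by matching $(\s1)$ with condition (1) through the dyadic covering, and $(\s3)$ with conditions (2) and (3) by decomposing $S_\phi\cap(S_\phi+\tfrac12)$ into the piece $(C/2+\Z)\cap(C/2+\tfrac12+\Z)$ (where Smith--Barnwell is inherited from the identity $|m_0^*(\xi)|^2+|m_0^*(\xi+1/2)|^2=1$, valid on all of $\T$ by maximality of $\phi^*$), the mixed piece (where condition (2) is exactly what is needed to balance the vanishing partner), and the piece where both $\xi$ and $\xi+\tfrac12$ fall in $(C\setminus C/2)+\Z$ (which condition (3) must exclude). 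That part of your argument is sound and is precisely the paper's argument.

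The gap is in how you obtain the reductive inclusion $C/2\subseteq C$. You claim condition (1) ``packages'' it, but (1) is purely a covering statement: a point of $C/2$ is permitted to lie in $C/2$ without lying in $C$, so no inclusion can be extracted from (1) alone. You genuinely need $C/2\subseteq C$ twice in the reverse direction --- once to upgrade ``$2^{-n}\xi\in C$ for some $n$'' to ``for all $n$ large,'' which is what $(\s1)$ actually requires, and once for the two-scale computation in $(\s2)$. The paper obtains this inclusion, together with the well-definedness of $m_0$, from the computation $\widehat{\phi}(2\xi)=\chi_{E/2}(\xi)m_0^*(\xi)\widehat{\phi}(\xi)$ carried out in the discussion immediately preceding the theorem (that is what ``As above'' refers to), and then declares $(\s2)$ automatic, so that the biconditional only concerns $(\s1)$ and $(\s3)$. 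Relatedly, your closing remark that the overlap $(C/2+\Z)\cap((C\setminus C/2)+\Z)$ is controlled by (2) and (3) does not work: those conditions constrain only intersections with \emph{half-integer} translates, whereas the integer-translate overlap is what would force $m_0$ to be simultaneously nonzero and zero; it is ruled out by reductivity itself (in the forward direction), not by (2) and (3). Rerouting these two points through the pre-theorem discussion repairs the argument and brings it in line with the paper's proof.
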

\proof
From the discussion preceding the statement of this theorem, it is automatic that $(\phi,m_0)$ satisfies $(\s2)$.  We will show that $(\phi,m_0)$ satisfies $(\s 1)$ if and only if property 1 above holds and that $(\phi,m_0)$ satisfies $(\s3)$ if and only if the second and third property hold.

First, note that, since $C/2\subset C$, property 1 is equivalent to the statement that, for each $\xi\notin C$, there exists a positive integer $n_\xi$ such that $2^{-j}\xi\in C/2$ for $j\ge n_\xi$.  Since we have the dyadic continuity property $(\s1)$ for $|\widehat{\phi^*}|$, it follows that property 1 is equivalent to $(\s1)$ for $|\widehat{\phi}|$.

Next, we know that $m_0^*$ satisfies the Smith--Barnwell equation at each $\xi\in \T$, so it's automatic that $|m_0|\le 1$ on $E$ and $|m_0(\xi)|^2+|m_0(\xi+1/2)|^2=1$ for $\xi\in S_0:=(C/2+\Z)\cap(C/2+1/2+\Z)$.  We also have $E\cap(E+1/2)=S_0\cup S_1\cup (S_1+1/2)\cup S_2$, where $S_1=(C/2+\Z)\cap(C\backslash C/2+1/2+\Z)$ and $S_2=(C\backslash C/2+\Z)\cap(C\backslash C/2+1/2+\Z)$.  Since $m_0=0$ on $(C\backslash C/2)+\Z)$, properties 2 and 3 are necessary and sufficient for $m_0$ to satisfy the Smith--Barnwell equations for $E\cap (E+1/2)$, i.e. for $(\phi,m_0)$ to satisfy $(\s3)$.

\qed

\subsection{Every Element of $\s$ is the Projection of an Element of $\s^*$}
The following is our main result and demonstrates the importance of the maximal scaling functions.

\begin{theorem}
Suppose that $\phi\in\s$ is non-maximal.  Then there is a $\phi^*\in\s^*$ so that $\phi=\chi_{S_\phi}\bullet\phi^*$.
\end{theorem}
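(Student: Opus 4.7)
My approach is to construct $\phi^*$ by extending the low-pass filter $m_0$ from $S_\phi$ to a Smith--Barnwell filter on all of $\T$, then building $\widehat{\phi^*}$ via the infinite product formula. Put $C = C_\phi$, $S = S_\phi$, and $F = \T \setminus S$; the non-maximality hypothesis gives $|F| > 0$, which supplies the freedom needed for the extension.

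The first step is to extend $m_0$ to a $1$-periodic function $m_0^*$ on all of $\T$ satisfying $|m_0^*(\xi)|^2 + |m_0^*(\xi+1/2)|^2 = 1$ a.e. The requirement $\chi_{S_\phi} \bullet \phi^* = \phi$, combined with the two-scale equation for $\phi^*$, forces $m_0^* = m_0$ on $C/2 + \Z$. Decomposing $\T$ via the two partitions by $S$ vs.\ $F$ and by $S + 1/2$ vs.\ $F + 1/2$ yields four cells: on $S \cap (S+1/2)$ the identity is automatic by ($\s3$); on the paired cells $S \cap (F+1/2)$ and $F \cap (S+1/2)$, swapped by $\xi \mapsto \xi + 1/2$, the $F$-side modulus is forced by the $S$-side; on the self-paired cell $F \cap (F+1/2)$ I would make any Smith--Barnwell-compatible choice, e.g.\ $|m_0^*| \equiv 1/\sqrt{2}$. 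The remaining freedom, in particular on $(C \setminus C/2) + \Z$ where $m_0$ originally vanishes, is used to arrange that $m_0^*$ be nonzero on the largest set that the identity allows.

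Next I would define $\widehat{\phi^*}(\xi) = \prod_{j \geq 1} m_0^*(\xi/2^j)$, with branches chosen so that for $\xi \in C$ the iterated two-scale equation combined with ($\s1$) makes the product equal to $\widehat{\phi}(\xi)$. The construction invoked in the remark following the definition of $LP(C)$ in Section \ref{glpf} (citing \cite{PSWXI}) then gives $\phi^* \in L^2(\R)$ with $|\widehat{\phi^*}| = \prod_j |m_0^*(\cdot/2^j)|$. Verifying $\phi^* \in \s$ is direct: ($\s1$) holds because eventually $\xi/2^j \in C/2$ so the tail matches $|\widehat{\phi}|$; ($\s2$) is the two-scale equation built into the construction; ($\s3$) is Step 1. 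By construction $\widehat{\phi^*} = \widehat{\phi}$ on $C$ and $\widehat{\phi^*} = 0 = \widehat{\phi}$ on $S \setminus C$, giving $\chi_{S_\phi} \bullet \phi^* = \phi$.

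Maximality follows from Theorem \ref{maximalpositive} once $p_{\phi^*} > 0$ a.e.\ is established, and here the Smith--Barnwell structure is decisive: for a.e.\ $\eta \in \T$, the two dyadic preimages $\eta/2$ and $\eta/2 + 1/2$ cannot both lie in the zero set $Z$ of $m_0^*$, so a ``good'' preimage exists at every level. Since $(\eta + k)/2^j$ eventually lands in $C/2 \not\subset Z$ by ($\s1$) for $\phi$, only finitely many good choices are needed, and these determine an integer $k$ with $\widehat{\phi^*}(\eta + k) \neq 0$, whence $p_{\phi^*}(\eta) > 0$. The main obstacle is executing Step 1 so that $Z$ contains no spurious zeros beyond those Smith--Barnwell itself imposes --- such unwanted zeros could block the good-preimage argument on a non-null set of $\eta$. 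The support-disjointness conditions implicit in the hypothesis $\phi \in \s$, analogous to those catalogued in the preceding theorem on projections $P_E \phi^* \in \s$, are precisely what allow a consistent extension to be made.
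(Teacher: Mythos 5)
Your overall strategy is the same as the paper's: extend the filter to a Smith--Barnwell filter $m_0^*$ on all of $\T$ that agrees with $m_0$ on $C/2+\Z$, realize $\phi^*$ through the infinite product (with a unimodular correction in $\M$ supplied by the $LP(C)$ machinery), and reduce maximality to $p_{\phi^*}>0$ via Theorem \ref{maximalpositive}; your four-cell partition of $\T$ by $S$ versus $\T\setminus S$ rather than by $S/2$ versus $(S/2)^c$ is a cosmetic difference. The first genuine gap is your assertion that $\widehat{\phi^*}=0=\widehat{\phi}$ on $S_\phi\setminus C_\phi$ ``by construction.'' This is in direct tension with your stated policy of making $m_0^*$ nonzero on the largest set Smith--Barnwell allows: $\widehat{\phi}$ vanishes on $S_\phi\setminus C_\phi$ precisely because the dyadic orbit $2^{-j}\xi$ of such a point passes through $(C\setminus C/2)+\Z$, where the original $m_0$ vanishes, and once you redefine $m_0^*$ to be nonzero there the product $\prod_{j\ge1}\abs{m_0^*(2^{-j}\xi)}$ can be strictly positive at points of $S_\phi\setminus C_\phi$, so that $\chi_{S_\phi}\bullet\phi^*\neq\phi$. (For $\widehat{\phi}(\xi)=(1-\abs{\xi})\chi_{[-1/4,1/4]}(\xi)$, which lies in $\s$ and is non-maximal, the ``maximally nonvanishing'' extension makes $\widehat{\phi^*}>0$ almost everywhere.) One must instead place zeros of $m_0^*$ in the free region so as to annihilate $\widehat{\phi^*}$ on $S_\phi\setminus C_\phi$ while still keeping $p_{\phi^*}>0$; these two demands pull in opposite directions, and reconciling them is exactly the content your sketch (and your appeal to the $P_E$ theorem) does not supply.

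The second gap is in the maximality argument: the termination of your good-preimage selection is circular. At stage $j$ you have determined $k$ only modulo $2^j$, whereas $(\s1)$ controls the dyadic orbit of a \emph{fixed} real number; the threshold beyond which $2^{-n}(\eta+k)$ lies in $C/2$ depends on the actual integer $k$ you are still constructing. Run forever, the selection produces a $2$-adic integer rather than an element of $\Z$, and without a mechanism to freeze the choices at a finite stage you never exhibit a translate $\eta+k$ with $\widehat{\phi^*}(\eta+k)\neq0$. This is precisely the difficulty the paper's proof is built to overcome: it argues by contradiction, showing that the $\Z$-invariant bad set $A=\bigcap_{n\in\Z}\left((C^*)^c+n\right)$, if of positive measure, forces --- via the Smith--Barnwell equation and repeated unwinding of the two-scale relation --- infinitely many zeros of $\widehat{\phi^*}$ along a single dyadic orbit, contradicting dyadic continuity. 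Your direct selection argument needs an analogous closing step before it can be accepted.
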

\begin{remark}
For a given $\phi$, it is highly unlikely that the choice of $\phi^*$ satisfying the conclusion of the above theorem will be unique.
\end{remark}
\proof
As usual, let $C=C_\phi$, $S=S_\phi=C+\Z$ and let $m_0$ be the 1-periodic function on $S$ for which $\frac{1}{\sqrt{2}}D^{-1}\phi=m_0\bullet\phi$.  Consider the function $m_0$ on the set $C/2+\Z$: by the aforementioned definition of $m_0$, one has that $m_0$ is nonzero (a.e.) on $C/2+\Z$.  We shall define an extension, $m_0^*$ of $m_0\big|_{C/2+\Z}$.  We note that $S/2$ is the disjoint union of $C/2+\Z$ and $C/2+1/2+\Z$ --- this follows by an elementary calculation.  On $C/2+1/2+\Z$, define $m_0^*$ via 
\[
m_0^*(\xi)=\nu(\xi)\sqrt{1-|m_0(\xi+1/2)|^2},
\]
for some measurable, 1-periodic, unimodular function $\nu$; this is well-defined since $|m_0|\le 1$ on $S$.  Now, we observe that since $S/2$ is invariant under translation by $\Z/2$, we have that $(S/2)^c$ is also invariant under such translations.  Thus we may find a measurable set $B$, which is invariant under translation by $\Z$, so that $(S/2)^c$ is the disjoint union of $B$ and $B+1/2$.  Now one merely needs to define a unit vector $(m_0^*(\xi),m_0^*(\xi+1/2))$ on $B$ (in a measurable way) so that neither component is 0.  Clearly this can be arranged in a multitude of ways.  The $m_0^*$ constructed in this way then satisfies the Smith--Barnwell equation on all of $\R$.

Because $(\phi,m_0)$ satisfy $(\s1)$ and $m_0^*=m_0$ on $S/2$, it follows that
\[
\lim_{n\rightarrow\infty}\prod_{j=n}^\infty |m_0^*(2^{-j}\xi)|=1\textrm{ for each }\xi\in\R,
\]
and, as we have previously argued, $(\phi_{|m_0^*|},|m_0^*|)\in\s$ with $\widehat{\phi}_{|m_0^*|}(\xi)=\prod_{j=1}^\infty |m_0^*(2^{-j}\xi)|$ for each $\xi$.  Then $|\widehat{\phi}|=\widehat{\phi}_{|m_0^*|}$ on $S$, so $|\widehat{\phi}|=\chi_S\widehat{\phi}_{|m_0^*|}$.  We can choose $\mu\in\M$ for $m_0^*=\mu|m_0^*|$.  Then as we discussed in \ref{glpf}, we may construct $\alpha\in\M$ for which $\delta_\alpha=\mu$ and, on $C$, we have $\widehat{\phi}=\alpha|\widehat{\phi}|$.  This means that $\alpha\cdot(\phi_{|m_0^*|,|m_0^*|})=(\phi^*,m_0^*)\in\s$, with $\phi=\chi_S\bullet\phi^*$.  To summarize, we have that
\begin{equation}\label{temp248}
\widehat{\phi^*}(\xi)=\alpha(\xi)\prod_{j=1}^\infty|m_0^*(2^{-j}\xi)|,
\end{equation}
where $\alpha$ is a unimodular function so that $\frac{\alpha(2\xi)}{\alpha(\xi)}$ is 1-periodic.  

It remains to prove that $\phi^*$ is in $\s^*$, i.e. that $\phi^*$ is maximal in the sense of Definition \ref{maximal}.  To that end, let $Z^*$ denote the zero set of $m_0^*$.  By construction, we observe that $Z^*\subset C+1/2+\Z$.  The zero set of $\widehat{\phi^*}$ is then $\bigcup_{n=1}^\infty 2^nZ^*:=(C^*)^c$.  The claim is that $C^*+\Z=\R$, up to a set of measure zero.  Suppose not.  Let $A=\bigcap_{n\in\Z}(C^*)^c+n$.  If $A$ has measure zero, then it is not hard to see that $C^*+\Z=\R$ up to a set of measure zero.  So suppose to the contrary that $A$ has positive measure.  We first make the observation that $A+1=A$.  Now, let $\xi\in A$.  Then $\xi_1=2^m\xi$ for some $\xi\in Z^*$ and positive integer $m$, as $A\subset (C^*)^c$.  Since $\xi\in C+1/2+\Z$, we know that $\xi+1/2\notin C+1/2+\Z$ (otherwise $|m_0(\xi)|^2+|m_0(\xi+1/2)|^2=0$, in violation of Smith--Barnwell).  Thus there is some integer $\beta$ so that $\widehat{\phi^*}(\xi+1/2+\beta)\neq 0$.

Now, since $A+1=A$, we have that $\xi_1+2^{m-1}(2\beta+1)\in A$.  Thus $\widehat{\phi^*}(\xi_1+2^{m-1}(2\beta+1))=0$.  Then by iteration of the two-scale equation,
\begin{align*}
0&=\widehat{\phi^*}(\xi_1+2^{m-1}(2\beta+1))\\
&=\widehat{\phi^*}(2^m\xi+2^{m-1}(2\beta+1))\\
&=m_0^*(2^{m-1}\xi+2^{m-2}(2\beta+1))\widehat{\phi^*}(2^{m-1}\xi+2^{m-2}(2\beta+1))\\
&=...\\
&=m_0^*(2^{m-1}\xi+2^{m-2}(2\beta+1))\cdot\cdot\cdot m_0^*(\xi+\beta+1/2))\widehat{\phi^*}(\xi+\beta+1/2).
\end{align*}

As stated at the beginning of the previous paragraph, the last term in the product of the previous line is nonzero.  Likewise, by 1-periodicity and Smith--Barnwell, $|m_0(\xi+\beta+1/2)|=|m_0(\xi+1/2)|=1$, since $m_0(\xi)=0$.  Thus the last two terms are nonzero.  Note that if $m=1$, this immediately produces a contradiction, so we may assume that $m>1$.  In such a case, there is some integer $k$ with $1\le k\le m-1$ for which $m_0(2^{m-k}\xi+2^{m-k-1}(2\beta+1))=0$.  By one periodicity, this means that $m_0^*(2^{m-k}\xi)=0$, meaning that $2^{m-k}\xi\in Z^*$.  Since $Z^*$ is 1-periodic, it follows that $2^{-k}\xi_1=2^{m-k}\xi\in A$.  Iterating this argument, we produce a strictly increasing sequence $k_n$ so that $\widehat{\phi^*}(2^{-k_j}\xi_1)=0$ for all $n$.  But this violates the dyadic continuity property of $\widehat{\phi^*}$, which contradicts the fact that $\phi^*\in\s$.  Hence $\phi^*$ indeed must be in $\s^*$, concluding the proof.

\qed

This theorem can be interpreted as a $\pmra$ cousin of a theorem of Naimark.  We will explain this (see the remark below) through a corollary of the above theorem.  Before stating the corollary, we give a definition.

\begin{definition}
Suppose that $\psi\in\p$.  Letting $W_j=D^j\langle\psi\rangle$, we say that $\psi$ is a semiorthogonal Parseval frame wavelet if $W_{j_1}\perp W_{j_2}$ whenever $j_1\neq j_2$.  We let $\pso$ denote the class of all such $\psi$.
\end{definition}

\begin{corollary}
Suppose that $\psi\in\pmra\cap\pso$, with $(\phi,m_0,m_1)$ the associated scaling function, low-pass filter, and high-pass filter.  Then there is a scaling function $\phi^*$ for an orthonormal MRA wavelet so that $\phi=\chi_{S_\phi}\bullet\phi^*$.
\end{corollary}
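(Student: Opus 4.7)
The plan is to combine the preceding theorem---which supplies a $\phi^*\in\s^*$ satisfying $\phi=\chi_{S_\phi}\bullet\phi^*$---with the extra rigidity provided by semiorthogonality, choosing $\phi^*$ so that in addition to the $\s^*$ conditions, the orthonormality condition $p_{\phi^*}\equiv 1$ a.e.\ holds. By the standard construction from the introduction, such a $\phi^*$ is then a scaling function for an orthonormal MRA wavelet, so that is what I must produce.

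The first observation is that semiorthogonality forces $p_\phi=\chi_{S_\phi}$. Since $\psi\in\pso$, the collection $\{T_k\psi:k\in\Z\}$ is a Parseval frame for $W_0=\langle\psi\rangle$, which is equivalent to $p_\psi=\chi_{S_\psi}$. Using that the sum $V_0(\psi)=\overline{\sum_{j=1}^\infty W_{-j}}$ is orthogonal in the semiorthogonal case, combined with the identity $D_\psi=p_\phi$ established in the proof of Theorem \ref{pmra1=pmra2}, we propagate the indicator-function property to the scaling level: $p_\phi=\chi_{S_\phi}$. The relation $\widehat\phi(\xi)=\chi_{S_\phi}(\xi)\widehat{\phi^*}(\xi)$ together with $\Z$-invariance of $S_\phi$ then gives $p_\phi(\xi)=\chi_{S_\phi}(\xi)\,p_{\phi^*}(\xi)$, so $p_{\phi^*}(\xi)=1$ for a.e.\ $\xi\in S_\phi$.

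The remaining task---and the heart of the argument---is to arrange $p_{\phi^*}=1$ also on $\R\setminus S_\phi$ by exploiting the freedom in choosing the extension $m_0^*$ on the region $(S/2)^c$ in the preceding theorem. The plan is to select $m_0^*$ there taking values in $\{0,1\}$ subject to Smith--Barnwell (relaxing the ``neither component zero'' constraint from the preceding theorem's construction) so that the infinite product $|\widehat{\phi^*}(\xi)|=\prod_{j\ge 1}|m_0^*(2^{-j}\xi)|$ restricted to $\R\setminus S_\phi$ is precisely the indicator of a measurable fundamental domain $F\subseteq\R\setminus S_\phi$ for the $\Z$-action. This yields
\[
p_{\phi^*}(\xi)=\sum_{k\in\Z}\chi_F(\xi+k)=1
\]
on $\R\setminus S_\phi$, completing the verification and producing a $\phi^*$ with $p_{\phi^*}\equiv 1$.

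The main obstacle is the measurable bookkeeping required to produce such a choice of $m_0^*$. For each $\xi\in\R\setminus S_\phi$, the dyadic orbit $(2^{-j}\xi)_{j\ge 1}$ eventually enters $C_\phi$ by property $(\s1)$, but may pass in and out of $S_\phi$ finitely many times beforehand; one must choose $m_0^*$ on $(S/2)^c$ so that for each $\Z$-coset in $\R\setminus S_\phi$, exactly one representative's entire dyadic orbit avoids the zero set of $|m_0^*|$ while all other representatives' orbits meet it. Existence of such a measurable selection follows from the dyadic partition condition (\ref{dyadicpartition}) applied to $C_{\phi^*}$, the conditions on $E$ developed earlier in this section, and a measurable section for the $\Z$-action on $\R\setminus S_\phi$.
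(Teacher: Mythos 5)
Your first two steps are sound and agree with the paper: semiorthogonality forces $p_\phi=\chi_{S_\phi}$ (the paper gets this from $D_\psi=\textrm{dim}_{V_0(\psi)}$ together with $D_\psi=p_\phi$), and then $\widehat\phi=\chi_{S_\phi}\widehat{\phi^*}$ with $S_\phi$ being $\Z$-invariant gives $p_{\phi^*}=1$ a.e.\ on $S_\phi$. The gap is in your final step. You propose to re-run the construction of the preceding theorem, choosing $m_0^*$ on $(S_\phi/2)^c$ so that $|\widehat{\phi^*}(\xi)|=\prod_{j\ge 1}|m_0^*(2^{-j}\xi)|$ equals $\chi_F(\xi)$ on $\R\setminus S_\phi$ for a fundamental domain $F$. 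But the value of this product at a point $\xi\notin S_\phi$ is not free: by $(\s1)$ the dyadic orbit $2^{-j}\xi$ eventually enters and remains in $C_\phi/2$, where $|m_0^*|=|m_0|$ is already pinned by $\phi$, and the pinned tail telescopes to $|\widehat{\phi}(2^{-n}\xi)|$ for a fixed finite $n=n_\xi$. Since every remaining factor has modulus at most $1$ (Smith--Barnwell), you get $|\widehat{\phi^*}(\xi)|\le|\widehat{\phi}(2^{-n_\xi}\xi)|$, which is strictly less than $1$ on a set of positive measure unless $|\widehat\phi|$ happens to be an indicator on $C_\phi$. So the target $|\widehat{\phi^*}|=\chi_F$ is unattainable in general, and the asserted ``measurable selection'' cannot exist; relaxing to the weaker goal $p_{\phi^*}\equiv 1$ via filter surgery couples all cosets and all dyadic scales through the single periodic function $m_0^*$ and is essentially the original hard problem. (Separately, allowing $m_0^*=0$ on parts of $(S_\phi/2)^c$ breaks the maximality argument of the preceding theorem, which relies on $Z^*\subset C_\phi+\tfrac12+\Z$; you would need $p_{\phi^*}>0$ by some other route.)

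The paper avoids all of this with a one-line normalization: take the $\phi^*$ produced by the theorem as is (maximal, so $p_{\phi^*}>0$ a.e., and $=1$ on $S_\phi$ by your own step two), and set $\widehat{\theta}=p_{\phi^*}^{-1/2}\,\widehat{\phi^*}$. Then $p_\theta\equiv 1$, so $\{T_k\theta\}$ is an orthonormal basis of $\langle\theta\rangle=\langle\phi^*\rangle$ and $\theta$ is a genuine orthonormal MRA scaling function; and since $p_{\phi^*}=1$ on $S_\phi$, one still has $\chi_{S_\phi}\bullet\theta=\chi_{S_\phi}\bullet\phi^*=\phi$. If you want to salvage your write-up, replace the filter construction by this renormalization.
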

\proof
Semiorthogonality of $\psi$ guarantees that $D_\psi(\xi)=\textrm{dim}_{V_0(\psi)}(\xi)$ which must be the characteristic function of a measurable set (this is \cite[Corollary 3.2]{PSWXII} translated into current terminology).  But it also holds that $p_\phi=D_\psi$, as argued in the proof of Theorem \ref{pmra1=pmra2}.  Thus $p_\phi$ is the characteristic function of a set.

Now, we have by the theorem we just proved that there is a $\psi^*\in\pmra$ with associated scaling function $\phi^*$ so that $\phi=\chi_{S_\phi}\bullet\phi^*$, with $p_{\phi^*}>0$ and $D_{\psi^*}=p_{\phi^*}$.  Now define $\theta$ to be\footnote{We should mention that this choice of $\theta$ is related to the notion of semiorthogonalization; see \cite{SSW}}
\[
\widehat{\theta}(\xi)=\frac{1}{\sqrt{D_{\psi^*}}}\widehat{\phi^*}(\xi).
\]
This method of choosing $\theta$ It is then easy to check that $\langle\theta\rangle=\langle\phi^*\rangle$, that $p_\theta\equiv 1$, and that $\phi=\chi_{S_\phi}\bullet\theta$.  Since $p_\theta\equiv 1$, it follows that $\{\theta(\cdot-k):k\in\Z\}$ forms an orthonormal basis for $\langle\theta\rangle$.  Since $\langle\theta\rangle=\langle\phi^*\rangle$, we have that $\theta$ is the scaling function for an orthonormal MRA, and thus is the scaling function for an orthonormal MRA wavelet $\psi_\theta$ as in \cite{HWWave}.

\qed

\begin{remark}
The above is, in some sense, an analogue of Naimark's theorem that if $\{v_n:n\in\Z\}$ is a Parseval frame on a Hilbert space $\mathcal{H}$, then $\mathcal{H}$ can be linearly and isometrically embedded into a larger Hilbert space $\mathcal{K}$ so that the image of $\{v_n:n\in\Z\}$ in $\mathcal{K}$ is an orthogonal projection of an orthonormal basis for $\mathcal{K}$.  The previous corollary then says that a similar statement holds for $\pmra\cap\pso$ at the level of the scaling functions.
\end{remark}

\bibliography{projections}   
\bibliographystyle{plain}   

\end{document}